
\documentclass[11pt,reqno]{amsart}
\usepackage{amssymb}
\usepackage{eurosym}
\usepackage{amsfonts,amssymb}
\usepackage{amsmath}
\usepackage{graphicx}
\usepackage{amscd}
\usepackage{fullpage}
\usepackage{amsfonts}
\usepackage{mathrsfs}
\usepackage{float}

\setcounter{MaxMatrixCols}{10}

\restylefloat{figure}
\newtheorem{theorem}{Theorem}

\newtheorem{definition}{Definition}
\newtheorem{example}{Example}

\newtheorem{lemma}{Lemma}

\numberwithin{equation}{section}



\begin{document}
\title[Oscillation of Equations with Non-monotone Arguments]
{Iterative
oscillation tests for differential equations \\
with several non-monotone arguments}
\author{Elena Braverman$^1$}
\address{Department of Mathematics and Statistics\\
University of Calgary\\
2500 University Drive N. W., Calgary, Canada AB T2N 1N4}
\email{maelena@math.ucalgary.ca} 
\author{George E. Chatzarakis}
\address{Department of Electrical and Electronic Engineering Educators\\
School of Pedagogical and Technological Education (ASPETE)\\
14121, N. Heraklio, Athens, Greece}
\email{geaxatz@otenet.gr, gea.xatz@aspete.gr}
\author{Ioannis P. Stavroulakis} 
\address{Department of Mathematics\\
University of Ioannina\\
451 10 Ioannina, Greece}
\email{ipstav@uoi.gr}

\begin{abstract}
Sufficient oscillation conditions involving $\limsup $ and $\liminf $ for
first-order differential equations with several non-monotone deviating
arguments and nonnegative coefficients are obtained. The results are based
on the iterative application of the Gr\"{o}nwall inequality. Examples
illustrating the significance of the results are also given. 

\vskip0.2cm\textbf{Keywords}: 
differential equations with deviating
arguments; non-monotone arguments; delay equations; advanced arguments;
oscillation; Gr\"{o}nwall inequality

\noindent
{\bf AMS Subject Classification:} 34K11, 34K06


\end{abstract}

\maketitle

\section{Introduction}

\footnotetext[1]{Corresponding author. E-mail maelena@math.ucalgary.ca, phone (403)-220-3956, fax 
(403)-282-5150} 

In this paper we consider the differential equation with several variable
deviating arguments of either delay 
\begin{equation}  \label{E_R}
x^{\prime}(t)+\sum_{i=1}^{m}p_{i}(t)x(\tau _{i}(t))=0,\;\; t \geq 0
\end{equation}
or advanced type 
\begin{equation}
x^{\prime}(t)-\sum_{i=1}^{m} p_{i}(t)x(\sigma _{i}(t))=0, \;\; t \geq 0.
\label{E_A}
\end{equation}%
Equations \eqref{E_R} and (\ref{E_A}) are studied under the following
assumptions: everywhere $p_{i}(t) \geq 0$, $1\leq i\leq m$, $t \geq 0$, $%
\tau _{i}(t)$, $1\leq i\leq m$, are Lebesgue measurable functions satisfying 
\begin{equation}
\tau _{i}(t)\leq t,\text{ \ \ }\forall t \geq 0 \text{ \ \ \ and \ \ \ }%
\lim\limits_{t\rightarrow \infty }\tau_{i}(t)=\infty, \ \ \ 1\leq i\leq m
\label{1.1}
\end{equation}
and 
\begin{equation}
\sigma _{i}(t)\geq t,\ \ t \geq 0 \text{, \ \ \ }1\leq i\leq m,  \label{1.2}
\end{equation}
respectively. In addition, we consider the initial condition for \eqref{E_R} 
\begin{equation}  \label{initial}
x(t)= \varphi(t), \; \; t \leq 0,
\end{equation}
where $\varphi:(-\infty,0] \to {\mathbb{R}}$ is a bounded Borel measurable
function.

\begin{definition}
\textbf{A solution of \eqref{E_R}, \eqref{initial} } is an absolutely
continuous on $[0,\infty)$ function satisfying \eqref{E_R} for almost all $t \geq 0$
and \eqref{initial} for all $t \leq 0$. By \textbf{a solution of \eqref{E_A} 
} we mean an absolutely continuous on $[0,\infty)$ function satisfying \eqref{E_A} for almost all $t \geq 0$.
\end{definition}

In the special case $m=1$ equations \eqref{E_R} and \eqref{E_A} reduce to
the form 
\begin{equation}
x^{\prime }(t)+p(t)x(\tau (t))=0\text{, \ \ \ }t\geq 0  \label{1.3}
\end{equation}%
and 
\begin{equation}
x^{\prime }(t)-p(t)x(\sigma (t))=0\text{, \ \ \ }t\geq 0,  \label{1.4}
\end{equation}%
respectively.

\begin{definition}
A solution $x(t)$ of \eqref{E_R} \textrm{{(}or \eqref{E_A}{)} is \textbf{%
oscillatory} if it is neither eventually positive nor eventually negative.
If there exists an eventually positive or an eventually negative solution,
the equation is \textbf{nonoscillatory}. An equation is \textbf{oscillatory}
if all its solutions oscillate. }
\end{definition}

In the last few decades, oscillatory behavior and stability of first-order
differential equations with deviating arguments have been extensively
studied, see, for example, papers \cite{2}-\cite{5}, \cite{8}-\cite{16}, 
\cite{17}-\cite{21} and references cited therein. For the general
oscillation theory of differential equations the reader is referred to the
monographs \cite{ABBD,6,7,17a}.

In 1978, Ladde \cite{16} and in 1982, Ladas and Stavroulakis \cite{15}
proved that if 
\begin{equation}
\liminf_{t\rightarrow \infty }\int\limits_{\tau _{\max
}(t)}^{t}\sum\limits_{i=1}^{m}p_{i}(s)ds>\frac{1}{e}\,,  \label{1.8}
\end{equation}%
where $\tau _{\max }(t)=\max_{1\leq i\leq m}\{\tau _{i}(t)\},$ then all
solutions of \eqref{E_R} oscillate, while if 
\begin{equation}
\liminf_{t\rightarrow \infty }\int\limits_{t}^{\sigma _{\min
}(t)}\sum\limits_{i=1}^{m}p_{i}(s)ds>\frac{1}{e}\,,  \label{1.9}
\end{equation}%
where $\displaystyle\sigma _{\min }(t)=\min_{1\leq i\leq m}\{\sigma
_{i}(t)\},$ then all solutions of \eqref{E_A} oscillate. See also \cite[%
Theorem 2.7.1]{17a} and \cite[Theorem 1$^{\prime }$]{9}.

In 1984, Hunt and Yorke \cite{10} proved that if $t-\tau _{i}(t)\leq \tau_{0}
$ for some $\tau_{0}>0$, $1\leq i\leq m,$ and 
\begin{equation}
\liminf_{t\rightarrow \infty }\sum\limits_{i=1}^{m}p_{i}(t)\left(
t-\tau_{i}(t)\right) >\frac{1}{e} \, ,  \label{1.10}
\end{equation}%
then all solutions of \eqref{E_R} oscillate.

In 1990, Zhou \cite{21} proved that if $\sigma _{i}(t)-t\leq \sigma_{0}$ for
some $\sigma_{0}>0$, $1\leq i\leq m,$ and 
\begin{equation}
\liminf_{t\rightarrow \infty }\sum\limits_{i=1}^{m}p_{i}(t)\left( \sigma
_{i}(t)-t\right) > \frac{1}{e} \, ,  \label{1.11}
\end{equation}%
then all solutions of \eqref{E_A} oscillate. See also this result in the
monograph \cite[Corollary 2.6.12]{7}.

For differential equation \eqref{1.3} with one delay, in 2011 Braverman and
Karpuz \cite{3} established the following theorem in the case that the
argument $\tau (t)$ is non-monotone 
and $g(t)$ is defined as 
\begin{equation*}
g(t)=\sup_{s\leq t}\tau (s),\ \ \ \ t\geq 0.
\end{equation*}

\begin{theorem}
\label{theorem1} Assume that \eqref{1.1} holds and 
\begin{equation}
\limsup\limits_{t\rightarrow \infty }\int\limits_{g(t)}^{t}p(s)\exp \left\{
\int\limits_{\tau (s)}^{g(t)}p(\xi )d\xi \right\} ds>1.  \label{1.12}
\end{equation}
Then all solutions of \eqref{1.3} oscillate.
\end{theorem}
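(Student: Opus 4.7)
The plan is to argue by contradiction, assume \eqref{1.3} has a nonoscillatory solution, and derive $\limsup_{t\to\infty}\int_{g(t)}^t p(s)\exp\{\int_{\tau(s)}^{g(t)}p(\xi)d\xi\}ds \leq 1$, contradicting \eqref{1.12}. Without loss of generality (the equation is linear, so $-x$ solves it too) we can assume $x$ is eventually positive, say $x(t)>0$ and $x(\tau(t))>0$ for $t\geq t_0$. From \eqref{1.3} we then get $x'(t) = -p(t)x(\tau(t)) \leq 0$, so $x$ is eventually nonincreasing.

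The first key step is to upgrade the trivial monotonicity bound $x(\tau(s))\geq x(g(t))$ (valid for $s\leq t$ since $\tau(s)\leq g(t)$ by definition of $g$) to an exponential lower bound. Because $\tau(u)\leq u$ and $x$ is nonincreasing, $x(\tau(u))\geq x(u)$, and substituting into \eqref{1.3} gives the differential inequality
\[
-\frac{x'(u)}{x(u)}\geq p(u),\qquad u\geq t_0.
\]
Integrating this from $\tau(s)$ to $g(t)$ (which requires $\tau(s)\leq g(t)$, true by the definition of $g$, and $\tau(s)\geq t_0$ for $t$ large enough by \eqref{1.1}) yields
\[
x(\tau(s))\geq x(g(t))\exp\!\left\{\int_{\tau(s)}^{g(t)}p(\xi)\,d\xi\right\}.
\]

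The second step is to integrate \eqref{1.3} over $[g(t),t]$, getting
\[
x(g(t))-x(t)=\int_{g(t)}^{t}p(s)x(\tau(s))\,ds.
\]
Dropping the positive term $x(t)$, substituting the exponential lower bound, and dividing by $x(g(t))>0$ produces
\[
1\geq \int_{g(t)}^{t}p(s)\exp\!\left\{\int_{\tau(s)}^{g(t)}p(\xi)\,d\xi\right\}ds.
\]
Taking $\limsup$ as $t\to\infty$ contradicts \eqref{1.12}, completing the proof.

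The main obstacle is verifying that the integration ranges are valid: we need $\tau(s)\leq g(t)$ for $s\in[g(t),t]$, which follows from $g(t)=\sup_{s\leq t}\tau(s)$ once one checks $g$ is nondecreasing with $g(t)\leq t$; and we need the monotonicity/positivity of $x$ to hold throughout $[\tau(s),t]$, which is guaranteed by choosing $t$ large enough that $\tau(s)\geq t_0$ for all $s\geq g(t)$, using $\lim_{t\to\infty}\tau(t)=\infty$ together with the fact that $g$ inherits this limit. Everything else is a straightforward manipulation of the delay ODE.
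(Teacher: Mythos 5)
Your proof is correct and follows essentially the same route the paper takes: it is exactly the $m=1$, $r=1$ case of the paper's argument, namely the Gr\"{o}nwall-type lower bound $x(\tau(s))\geq x(g(t))\exp\{\int_{\tau(s)}^{g(t)}p(\xi)\,d\xi\}$ of Lemma~\ref{lemma2.1} combined with integrating the equation over $[g(t),t]$ and dropping $x(t)>0$, as in the proof of Theorem~\ref{theorem2.4}. Your attention to the domain issues (that $\tau(s)\leq g(t)$ for $s\leq t$ and that $\tau(s)\geq t_0$ for large $t$) matches what the paper implicitly uses, so no gap remains.
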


In 2014, Theorem~\ref{theorem1} was improved by Stavroulakis [21] as follows:

\begin{theorem}
Assume that \eqref{1.1} holds,%
\begin{equation*}
0<\alpha :=\liminf_{t\rightarrow \infty }\int\limits_{\tau
(t)}^{t}p(s)ds\leq \frac{1}{e}
\end{equation*}%
and 
\begin{equation*}
\limsup_{t\rightarrow \infty }\int_{g(t)}^{t}p(s)\exp \left\{ \int_{\tau
(s)}^{g(t)}p(\xi )d\xi \right\} ~ds>1-\frac{1-\alpha -\sqrt{1-2\alpha
-\alpha ^{2}}}{2}.
\end{equation*}%
Then all solutions of \eqref{1.3} oscillate.
\end{theorem}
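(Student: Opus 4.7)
I argue by contradiction, supposing \eqref{1.3} admits an eventually positive solution $x(t)$; the eventually negative case is symmetric. Since $p\ge0$, the equation forces $x'(t)\le0$ eventually, so $x$ is nonincreasing for large $t$. For $s\in[g(t),t]$ the inequality $\tau(s)\le g(s)\le g(t)$ combined with a Gr\"onwall-type iteration of the integral form of \eqref{1.3} yields the exponential lower bound $x(\tau(s))\ge x(g(t))\exp\bigl(\int_{\tau(s)}^{g(t)}p(\xi)\,d\xi\bigr)$. Integrating \eqref{1.3} over $[g(t),t]$ and inserting this bound produces the Braverman--Karpuz type inequality
\begin{equation}
\frac{x(t)}{x(g(t))}\;\le\;1-\int_{g(t)}^{t}p(s)\exp\Bigl(\int_{\tau(s)}^{g(t)}p(\xi)\,d\xi\Bigr)ds,\label{plan:star}
\end{equation}
from which Theorem~\ref{theorem1} follows via the trivial lower bound $x(t)/x(g(t))\ge0$. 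The present theorem requires replacing this trivial bound by the quantitative estimate $x(t)/x(g(t))\ge d+o(1)$ in the $\liminf$, where $d:=\tfrac12\bigl(1-\alpha-\sqrt{1-2\alpha-\alpha^{2}}\bigr)$.

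The decisive auxiliary claim is therefore
\begin{equation*}
\liminf_{t\to\infty}\frac{x(t)}{x(g(t))}\;\ge\;d;
\end{equation*}
once proved, passage to the $\limsup$ in \eqref{plan:star} yields $\limsup\int_{g(t)}^{t}p(s)\exp(\cdots)\,ds\le1-d$, contradicting the hypothesis. To prove it, set $r:=\liminf x(t)/x(g(t))$ and suppose for contradiction $r<d$; choose $t_{n}\to\infty$ with $x(t_{n})/x(g(t_{n}))\to r$. The coarse bound $x(\tau(s))\ge x(g(t_{n}))$ in the integral of \eqref{1.3} over $[g(t_{n}),t_{n}]$ produces $\int_{g(t_{n})}^{t_{n}}p\le1-r+o(1)$, and hence, via the $\alpha$-hypothesis, $\int_{\tau(t_{n})}^{g(t_{n})}p\ge\alpha-(1-r)+o(1)$. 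Iterating the integral form of \eqref{1.3} once more on the inner range $[\tau(\xi),g(t_{n})]$ sharpens the bound $x(\tau(\xi))\ge x(g(t_{n}))$ used in deriving \eqref{plan:star}, and a Fubini rewrite of the resulting second-order integral injects an extra contribution of size $\alpha^{2}/2$ into the right-hand side. After careful bookkeeping, $r$ is forced to satisfy
\begin{equation*}
r^{2}-(1-\alpha)\,r+\frac{\alpha^{2}}{2}\;\le\;0,
\end{equation*}
whose roots are $\tfrac12\bigl(1-\alpha\pm\sqrt{1-2\alpha-\alpha^{2}}\bigr)$, so $r\ge d$, contradicting the assumption $r<d$.

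The chief obstacle is obtaining the quadratic correction with the correct constant $\tfrac12$ in front of $\alpha^{2}$. The linear term $(1-\alpha)\,r$ falls out cleanly from the first iteration, but extracting the coefficient of $\alpha^{2}$ requires a careful double-integration-plus-Fubini argument coupled with the $\alpha$-bound applied on the inner intervals $[\tau(\xi),\xi]$; the non-monotonicity of $\tau$ forces one to work throughout with the monotonization $g(t)$, and every estimate must survive the nested $\liminf$/$\limsup$ passages. Once the quadratic inequality is in hand, extracting the threshold $1-d$ and verifying $d\in(0,1)$ for $\alpha\in(0,1/e]$ is elementary algebra.
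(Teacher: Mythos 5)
Your overall strategy is the right one and coincides with how the paper proves the more general Theorem~\ref{theorem2.4_abc} (of which this statement is essentially the case $m=1$, $r=1$): integrate \eqref{1.3} over $[g(t),t]$, insert the Gr\"onwall bound $x(\tau(s))\ge x(g(t))\exp\{\int_{\tau(s)}^{g(t)}p(\xi)\,d\xi\}$ to obtain $x(t)/x(g(t))\le 1-\int_{g(t)}^{t}p(s)\exp\{\int_{\tau(s)}^{g(t)}p(\xi)\,d\xi\}\,ds$, and then replace the trivial bound $x(t)/x(g(t))\ge 0$ by the quantitative estimate $\liminf_{t\to\infty}x(t)/x(g(t))\ge d$ with $d=\tfrac12\bigl(1-\alpha-\sqrt{1-2\alpha-\alpha^{2}}\bigr)$. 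The paper does not prove that estimate: it is exactly Lemma~\ref{lemma_for_max_decrease}, quoted from Yu, Wang, Zhang and Qian (1992), and the theorem itself is quoted from Stavroulakis (2014) without proof. Had you simply invoked that known lemma, your argument would be complete and essentially identical to the paper's treatment.

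The gap is in your proof of the auxiliary claim. The quadratic $r^{2}-(1-\alpha)r+\alpha^{2}/2\le 0$ does have the correct roots, but deriving it is the entire content of the Yu et al.\ lemma, and you do not carry it out: ``iterating once more,'' ``a Fubini rewrite injects $\alpha^{2}/2$'' and ``careful bookkeeping'' is precisely where all the work lives. Moreover, the one concrete intermediate step you do record is vacuous in the relevant regime: from $\int_{g(t_{n})}^{t_{n}}p\le 1-r+o(1)$ you deduce $\int_{\tau(t_{n})}^{g(t_{n})}p\ge\alpha-(1-r)+o(1)$, but since $\alpha\le 1/e$ while $1-r$ may be close to $1$ (a priori $r$ is only known to be nonnegative), the right-hand side is typically negative and the inequality carries no information. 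The actual proof requires a more delicate device --- choosing an intermediate point $t^{*}\in(\tau(t),t)$ that splits $\int_{\tau(t)}^{t}p$ into prescribed portions and integrating the equation separately over the two subintervals (compare the splitting used in the paper's proof of Theorem~\ref{theorem3.3}) --- before the quadratic in $r$ emerges. As written, the key estimate is asserted rather than proved; either supply that argument in full or cite Lemma~\ref{lemma_for_max_decrease} directly.
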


In 2015, Chatzarakis and \"{O}calan \cite{4} established the following
theorem in the case that the arguments $\sigma _{i}(t)$, $1\leq i\leq m$ are
non-monotone 
and $\rho _{i}(t)=\inf_{s\geq t}\sigma _{i}(s),$ $t\geq 0$, $\rho
(t)=\min_{1\leq i\leq m}\rho _{i}(t),$ $t\geq 0$:

\begin{theorem}
\label{theorem3}
Assume that \eqref{1.2} holds, and either 
\begin{equation}
\limsup\limits_{t\rightarrow \infty
}\int\limits_{t}^{\rho(t)}\sum\limits_{i=1}^{m}p_{i}(s)\exp \left\{
\sum\limits_{j=1}^{m}\int\limits_{\rho _{i}(t)}^{\sigma _{i}(s)}p_{j}(\xi
)d\xi \right\} ds>1\text{,}  \label{1.13}
\end{equation}%
or 
\begin{equation}
\liminf_{t\rightarrow \infty }\int\limits_{t}^{\rho
(t)}\sum\limits_{i=1}^{m}p_{i}(s)\exp \left\{
\sum\limits_{j=1}^{m}\int\limits_{\rho _{i}(t)}^{\sigma _{i}(s)}p_{j}(\xi
)d\xi \right\} ds>\frac{1}{e}\text{.}  \label{1.14}
\end{equation}%
Then all solutions of \eqref{E_A} oscillate.
\end{theorem}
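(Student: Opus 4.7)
I would proceed by contradiction. Assume \eqref{E_A} admits a nonoscillatory solution; after replacing $x$ by $-x$ if necessary, take $x(t)>0$ for all sufficiently large $t$. Because $p_i\ge 0$ and $\sigma_i(t)\ge t$, equation \eqref{E_A} then gives $x'(t)\ge 0$, so $x$ is eventually nondecreasing. The auxiliary arguments $\rho_i$ are by construction nondecreasing and satisfy $t\le \rho_i(t)\le \sigma_i(t)$; hence $\rho(t)\ge t$, and whenever $s\ge t$ one also has $\sigma_i(s)\ge \rho_i(t)$. Combined with monotonicity of $x$ this yields $x(\sigma_i(s))\ge x(\rho_i(t))\ge x(\rho(t))>0$ throughout the argument.

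The central step is a logarithmic Gr\"{o}nwall bound that replaces the monotone-argument tricks of the classical theory. Writing
\[
\frac{x'(u)}{x(u)}\;=\;\sum_{j=1}^m p_j(u)\,\frac{x(\sigma_j(u))}{x(u)}\;\ge\;\sum_{j=1}^m p_j(u),
\]
and integrating from $\rho_i(t)$ up to $\sigma_i(s)$, I obtain
\[
x(\sigma_i(s))\;\ge\; x(\rho_i(t))\,\exp\!\Bigl\{\sum_{j=1}^m\!\int_{\rho_i(t)}^{\sigma_i(s)}\!p_j(\xi)\,d\xi\Bigr\}.
\]
Integrating \eqref{E_A} over $[t,\rho(t)]$ and substituting this lower bound, using also $x(\rho_i(t))\ge x(\rho(t))$, gives
\[
x(\rho(t))-x(t)\;\ge\; x(\rho(t))\!\int_t^{\rho(t)}\!\sum_{i=1}^m p_i(s)\exp\!\Bigl\{\sum_{j=1}^m\!\int_{\rho_i(t)}^{\sigma_i(s)}\!p_j(\xi)\,d\xi\Bigr\}ds.
\]
Since $x(t)>0$, dropping it and dividing by $x(\rho(t))>0$ shows that the integral on the right is strictly less than $1$ for all large $t$. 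Taking $\limsup$ contradicts \eqref{1.13}, which settles the first part.

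The main obstacle is deriving a contradiction from \eqref{1.14}, because the sharp $1/e$ threshold is not reached by a single integration. Here the plan is to iterate the Gr\"{o}nwall estimate in order to amplify the exponent. Set $\lambda:=\liminf_{t\to\infty}x(\rho(t))/x(t)\ge 1$ and feed the lower bound for $x(\sigma_i(s))$ repeatedly into the right-hand side of \eqref{E_A}, producing a chain of inequalities that, in the limit, yields a transcendental inequality of Koplatadze--Chanturia type, essentially
\[
\lambda\;\ge\; e^{c\lambda}, \qquad c:=\liminf_{t\to\infty}\int_t^{\rho(t)}\sum_{i=1}^m p_i(s)\exp\!\Bigl\{\sum_{j=1}^m\!\int_{\rho_i(t)}^{\sigma_i(s)}\!p_j(\xi)\,d\xi\Bigr\}ds.
\]
Since $\sup_{\lambda\ge 1}(\ln\lambda)/\lambda=1/e$, attained at $\lambda=e$, this forces $c\le 1/e$, contradicting \eqref{1.14}. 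The technical subtlety, which I expect to be the hardest part, is to retain uniform control of the non-monotone arguments $\sigma_i$ through the iteration and to justify the passage to the $\liminf$ with remainders that vanish uniformly in $t$ at each step.
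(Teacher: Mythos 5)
Your treatment of the $\limsup$ condition \eqref{1.13} is correct and coincides with the method this paper uses for the corresponding delay and advanced results (Theorem~\ref{theorem2.4} and Theorem~\ref{theorem2.4a} with $r=1$): the Gr\"onwall bound $x(\sigma_i(s))\ge x(\rho_i(t))\exp\bigl\{\sum_j\int_{\rho_i(t)}^{\sigma_i(s)}p_j(\xi)\,d\xi\bigr\}$, integration of \eqref{E_A} over $[t,\rho(t)]$, and the observation that $x(t)>0$ forces the weighted integral to stay below $1$. That half is fine.

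The $\liminf$ half is not a proof but a plan, and the plan as stated has a concrete obstruction. First, the inequality $\ln\lambda\ge c\lambda$ with $c$ the \emph{weighted} integral does not follow from the iteration you describe: your Gr\"onwall step gives $x(\sigma_i(s))/x(s)\ge\bigl(x(\rho(t))/x(s)\bigr)\exp\{\cdots\}$, and for $s\in[t,\rho(t)]$ the prefactor $x(\rho(t))/x(s)$ is only $\ge 1$, not $\ge\lambda$; trying to extract both the factor $\lambda\approx x(\rho(s))/x(s)$ and the exponential with lower limit $\rho_i(t)$ charges the growth of $x$ over overlapping intervals twice (if you shift the lower limit to $\rho_i(s)$ to avoid the overlap, you get a smaller exponent and hence a weaker theorem). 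Second, even granting $\ln\lambda\ge c\lambda$, concluding $c\le 1/e$ requires $\lambda<\infty$, and excluding $\lambda=\infty$ is precisely where the paper's own proof of the delay analogue (Theorem~\ref{theorem3.3}) does its real work: it first shows $x(g(t))/x(t)\ge(ec)^k$ for every $k$, then contradicts this by a splitting-point argument --- choosing $t_m\in(g(t),t)$ so that each half-interval carries weighted mass at least $c/2$, integrating the equation over each half to obtain $x(t_m)\ge\frac{c}{2}\,x(g(t))$ and $x(g(t))\ge\frac{c}{2}\,x(g(t_m))$, hence the uniform bound $x(g(t_m))\le\frac{4}{c^2}\,x(t_m)$. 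You would need the advanced-equation version of this boundedness argument (or an equivalent lemma bounding $x(\rho(t))/x(t)$); without it, and without an actual derivation of the transcendental inequality, the $\liminf$ case remains open in your write-up. (Note also that Theorem~\ref{theorem3} itself is quoted here from the literature; the in-paper proofs to model are those of Theorems~\ref{theorem2.4} and~\ref{theorem3.3}.)
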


In addition to purely mathematical interest, consideration of non-monotone
arguments is important, since it approximates the natural phenomena
described by equations of the type of \eqref{E_R} or \eqref{E_A}. In fact,
there are always natural disturbances (e.g. noise in communication systems)
that affect all the parameters of the equation and therefore monotone
arguments will generally become non-monotone. In view of this, it is
interesting to consider the case where the arguments (delays and advances)
are non-monotone. In the present paper we obtain sufficient oscillation
conditions involving $\limsup $ and $\liminf $. 

\section{Main Results}

In this section, we establish sufficient oscillation conditions for %
\eqref{E_R} and \eqref{E_A} satisfying \eqref{1.1} and \eqref{1.2},
respectively. The method we apply is based on the iterative construction of
solution estimates and repetitive application of the Gr\"{o}nwall
inequality. It also uses some ideas of \cite{12}, where some oscillation
results for a differential equation with a single delay were established.

\subsection{Delay equations}

Let 
\begin{equation}
g _{i}(t)=\sup_{0\leq s\leq t}\tau _{i}(s),\text{ \ \ }t \geq 0  \label{2.3}
\end{equation}
and 
\begin{equation}
g(t)=\max_{1\leq i\leq m} g_{i}(t),\text{ \ \ }t\geq 0\text{.}  \label{2.4}
\end{equation}
As follows from their definitions, the functions $g_i(t)$, $1\leq i \leq m$
and $g(t)$ are non-decreasing Lebesgue measurable functions satisfying $g
(t)\leq t$, $g_{i}(t)\leq t$, $1\leq i\leq m$ for all $t \geq 0$.

The following lemma provides an estimation for a rate of decay for a
positive solution. Such estimates are a basis for most oscillation
conditions.

\begin{lemma}
\label{lemma2.1} Assume that $x(t)$ is a positive solution of \eqref{E_R}.
Denote 
\begin{equation}
a_{1}(t,s):=\exp \left\{ \int_{s}^{t}\sum_{i=1}^{m}p_{i}(\zeta )~d\zeta
\right\}  \label{2.7}
\end{equation}%
and 
\begin{equation}
a_{r+1}(t,s):=\exp \left\{ \int_{s}^{t}\sum_{i=1}^{m}p_{i}(\zeta
)a_{r}(\zeta ,\tau _{i}(\zeta ))~d\zeta \right\} ,\;\;r\in {\mathbb{N}}.
\label{2.8}
\end{equation}%
Then 
\begin{equation}
x(t)a_{r}(t,s)\leq x(s),\text{ \ \ \ \ }0\leq s\leq t.  \label{2.9}
\end{equation}
\end{lemma}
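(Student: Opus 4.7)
My plan is to prove \lemref{lemma2.1} by induction on $r$, with the base case following from the fact that a positive solution of \eqref{E_R} is non-increasing and the inductive step being a straightforward Grönwall integration.

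For the \textbf{base case} $r=1$, I would observe that if $x(t)>0$, then from \eqref{E_R} we have $x^{\prime}(t)=-\sum_{i=1}^m p_i(t)x(\tau_i(t))\le 0$, so $x$ is non-increasing. Since $\tau_i(t)\le t$, this gives $x(\tau_i(t))\ge x(t)$, hence
\[
x^{\prime}(t)\le -\Bigl(\sum_{i=1}^m p_i(t)\Bigr)x(t).
\]
Dividing by $x(t)>0$ and integrating from $s$ to $t$ yields $x(t)/x(s)\le \exp\{-\int_s^t\sum p_i(\zeta)\,d\zeta\}$, which is exactly \eqref{2.9} for $r=1$ in view of \eqref{2.7}.

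For the \textbf{inductive step}, assume \eqref{2.9} for some $r\in\mathbb{N}$, namely $x(t)a_r(t,s)\le x(s)$ for $0\le s\le t$. Applied with $s$ replaced by $\tau_i(\zeta)$ and $t$ replaced by $\zeta$, this gives $x(\tau_i(\zeta))\ge a_r(\zeta,\tau_i(\zeta))\,x(\zeta)$. Substituting into \eqref{E_R},
\[
x^{\prime}(\zeta)=-\sum_{i=1}^m p_i(\zeta)x(\tau_i(\zeta))\le -\Bigl(\sum_{i=1}^m p_i(\zeta)a_r(\zeta,\tau_i(\zeta))\Bigr)x(\zeta).
\]
Again dividing by $x(\zeta)$ and integrating from $s$ to $t$ yields $x(t)/x(s)\le 1/a_{r+1}(t,s)$ by definition \eqref{2.8}, completing the induction.

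The main obstacle is really just a \emph{technicality} about the range where the induction is valid: the hypothesis $x>0$ is taken eventually, and the step of plugging $\tau_i(\zeta)$ into the inductive hypothesis requires $\tau_i(\zeta)\ge 0$, which is guaranteed only for $\zeta$ sufficiently large, since $\tau_i(t)\to\infty$. One therefore needs to restrict $s,t$ to a half-line $[T_0,\infty)$ on which $x>0$ and $\tau_i(t)\ge T_0$; because the equation is autonomous with respect to time-shift of the starting point, this causes no loss of generality and does not affect the final oscillation criteria that will use the lemma. Apart from this, the argument is essentially a double application of the standard Grönwall-type trick of bounding $x^{\prime}/x$ by a coefficient that improves at each iteration.
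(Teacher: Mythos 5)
Your proof is correct and follows essentially the same route as the paper: the base case via monotonicity of a positive solution plus the Gr\"{o}nwall inequality, and the inductive step by substituting the estimate at $(\zeta,\tau_i(\zeta))$ back into \eqref{E_R} and integrating again. Your closing remark about restricting to a half-line where $x>0$ and $\tau_i(\zeta)\ge T_0$ is a sensible technical point that the paper glosses over, but it does not change the argument.
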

\begin{proof}
The function $x(t)$ is a positive solution of \eqref{E_R} for any $t$, so 
\begin{equation*}
x^{\prime }(t)=-\sum_{i=1}^{m}p_{i}(t)x(\tau _{i}(t))\leq 0,\ \ \ \ t\geq 0,
\end{equation*}%
which means that the solution $x(t)$ is monotonically decreasing. Thus $%
x(\tau _{i}(t))\geq x(t)$ and 
\begin{equation*}
x^{\prime }(t)+x(t)\sum_{i=1}^{m}p_{i}(t)\leq 0\text{, \ \ \ \ }t\geq 0.
\end{equation*}%
Applying the Gr\"{o}nwall inequality, we obtain 
\begin{equation*}
x(t)\leq x(s)\exp \left\{ -\int_{s}^{t}\sum_{i=1}^{m}p_{i}(\zeta )~d\zeta
\right\} ,\ \ \ \ 0\leq s\leq t,
\end{equation*}%
or 
\begin{equation*}
x(t)\exp \left\{ \int_{s}^{t}\sum_{i=1}^{m}p_{i}(\zeta )~d\zeta \right\}
\leq x(s),\ \ \ \ 0\leq s\leq t,
\end{equation*}%
that is, estimate \eqref{2.9} is valid for $r=1$.

Next, let us proceed to the induction step: assume that \eqref{2.9} holds
for some $r>1$, then 
\begin{equation}
x(t)a_{r}(t,\tau _{i}(t))\leq x(\tau _{i}(t)).  \label{2.6}
\end{equation}%
Substituting \eqref{2.6} into \eqref{E_R} leads to the estimate 
\begin{equation*}
x^{\prime }(t)+x(t)\sum_{i=1}^{m}p_{i}(t)a_{r}(t,\tau _{i}(t))\leq 0.
\end{equation*}%
Again, applying the Gr\"{o}nwall inequality, we have%
\begin{equation*}
x(t)\leq x(s)\exp \left\{ -\int_{s}^{t}\sum_{i=1}^{m}p_{i}(\zeta
)a_{r}(\zeta ,\tau _{i}(\zeta ))~d\zeta \right\} ,
\end{equation*}%
or 
\begin{equation*}
x(t)\exp \left\{ \int_{s}^{t}\sum_{i=1}^{m}p_{i}(\zeta )a_{r}(\zeta ,\tau
_{i}(\zeta ))~d\zeta \right\} \leq x(s),
\end{equation*}
that is, 
\begin{equation*}
x(t)a_{r+1}(t,s)\leq x(s),
\end{equation*}%
which completes the induction step and the proof of the lemma.
\end{proof}

Let us illustrate how the estimate developed in Lemma~\ref{lemma2.1} works
in the case of autonomous equations. The series of estimates is evaluated
using computer tools, which recently became an efficient tool in
computer-assisted proofs \cite{1}. We suggest that, similarly, a computer
algebra can be used to construct the estimate iterates and, ideally, the
limit estimate. The example below illustrates the procedure.

\begin{example}
\label{example2.1} The equation 
\begin{equation*}
x^{\prime}(t)+ \alpha e^{-\alpha} x(t-1)=0,\;\; t \geq 0, \;\; \alpha \geq 0
\end{equation*}
has an exact nonoscillatory solution $e^{-\alpha t}$. For $\alpha=0.5$ the
exact rate of decay (up to the sixth digit after the decimal point) is $%
x(t+1) \approx 0.606531 x(t)$, while $a_1^{-1}(t,t-1) \approx 0.738403$, $%
a_2^{-1}(t,t-1) \approx 0.663183$, $a_{10}^{-1}(t,t-1) \approx 0.606725$, $%
a_{18}^{-1}(t,t-1) \approx 0.606531$. The largest value of the coefficient
of $1/e$ is attained at $\alpha=1$; it is well known that it is the maximal
coefficient when the equation is still nonoscillatory. The decay of the
estimate $x(t+1)\leq \frac{1}{e} x(t) \approx 0.367879 x(t)$ is the slowest: 
$a_1^{-1}(t,t-1) \approx 0.692201$, $a_2^{-1}(t,t-1) \approx 0.587744$, $%
a_{10}^{-1}(t,t-1) \approx 0.430949$, $a_{50}^{-1}(t,t-1) \approx 0.381994$, 
$a_{100}^{-1}(t,t-1) \approx 0.375068$, $a_{1000}^{-1}(t,t-1) \approx
0.368613$.
\end{example}

\begin{theorem}
\label{theorem2.4} 
Let $p_{i}(t)\geq 0$, $1\leq i\leq m$, and $g(t)$ be
defined by \eqref{2.4}, while $a_{r}(t,s)$ by \eqref{2.7},\eqref{2.8}. If %
\eqref{1.1} holds and for some $r\in \mathbb{N}$ 
\begin{equation}
\limsup_{t\rightarrow \infty }\int_{g(t)}^{t}\sum_{i=1}^{m}p_{i}(\zeta
)a_{r}(g(t),\tau _{i}(\zeta ))~d\zeta >1,  \label{2.10}
\end{equation}
then all solutions of \eqref{E_R} oscillate.
\end{theorem}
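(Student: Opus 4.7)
The natural plan is a proof by contradiction: assume there exists a nonoscillatory solution of (\ref{E_R}). Since the equation is linear and the coefficients $p_i$ are nonnegative, the negative of a solution is again a solution, so without loss of generality I may take the solution $x(t)$ to be eventually positive. Choose $T$ large enough that $x(t)>0$ and $x(\tau_i(t))>0$ for all $t\ge T$; then Lemma~\ref{lemma2.1} applies on $[T,\infty)$.

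The key mechanism is to integrate equation (\ref{E_R}) over an interval whose endpoints are anchored by $g$. Specifically, I integrate from $g(t)$ to $t$ (for $t$ large enough that $g(t)\ge T$) to obtain
\begin{equation*}
x(t) - x(g(t)) + \int_{g(t)}^{t} \sum_{i=1}^{m} p_i(\zeta)\, x(\tau_i(\zeta))\, d\zeta = 0.
\end{equation*}
For $\zeta \in [g(t),t]$, the definitions (\ref{2.3})--(\ref{2.4}) together with the fact that $g$ is non-decreasing give $\tau_i(\zeta) \le g_i(\zeta) \le g(\zeta) \le g(t)$, so $\tau_i(\zeta) \le g(t)$. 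Hence Lemma~\ref{lemma2.1}, applied with the pair $(s,t)$ replaced by $(\tau_i(\zeta), g(t))$, yields
\begin{equation*}
x(\tau_i(\zeta)) \ge x(g(t))\, a_r(g(t), \tau_i(\zeta)).
\end{equation*}

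Substituting this lower bound into the integrated equation and using $x(t)>0$, I get
\begin{equation*}
x(g(t)) \int_{g(t)}^{t} \sum_{i=1}^{m} p_i(\zeta)\, a_r(g(t),\tau_i(\zeta))\, d\zeta \le x(g(t)) - x(t) < x(g(t)).
\end{equation*}
Dividing by the positive quantity $x(g(t))$ gives
\begin{equation*}
\int_{g(t)}^{t} \sum_{i=1}^{m} p_i(\zeta)\, a_r(g(t),\tau_i(\zeta))\, d\zeta < 1
\end{equation*}
for every sufficiently large $t$. Taking the $\limsup$ as $t\to\infty$ contradicts hypothesis (\ref{2.10}), completing the proof.

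The steps are essentially routine once Lemma~\ref{lemma2.1} is in hand; the only point requiring care is the geometric verification that $\tau_i(\zeta)\le g(t)$ throughout $\zeta\in[g(t),t]$, which is what allows the lemma to be invoked with the argument $g(t)$ on the left and $\tau_i(\zeta)$ on the right. This monotonicity observation is the linchpin that makes the iterative bound $a_r$ available uniformly in $\zeta$ on the integration interval, and hence drives the contradiction against $\limsup > 1$.
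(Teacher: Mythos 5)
Your proposal is correct and follows essentially the same route as the paper's own proof: integrate \eqref{E_R} over $[g(t),t]$, use the chain $\tau_i(\zeta)\le g_i(\zeta)\le g(\zeta)\le g(t)$ together with Lemma~\ref{lemma2.1} to bound $x(\tau_i(\zeta))$ from below by $x(g(t))\,a_r(g(t),\tau_i(\zeta))$, and derive the contradiction with \eqref{2.10}. Your explicit verification of the monotonicity chain is a slightly more careful rendering of the step the paper states as ``$\tau_i(t)\le g(t)$,'' but the argument is the same.
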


\begin{proof}
Assume, for the sake of contradiction, that there exists a nonoscillatory
solution $x(t)$ of \eqref{E_R}. Since $-x(t)$ is also a solution of %
\eqref{E_R}, we can consider only the case when the solution $x(t)$ is
eventually positive. Then there exists $t_{1}>0$ such that $x(t)>0$ and $%
x\left( \tau _{i}(t)\right)>0$, for all $t\geq t_{1}$. Thus, from \eqref{E_R}
we have 
\begin{equation*}
x^{\prime }(t)=-\sum_{i=1}^{m}p_{i}(t)x(\tau _{i}(t))\leq 0,\text{ \ \ for
all }t\geq t_{1}\text{,}
\end{equation*}
which means that $x(t)$ is an eventually non-increasing positive function.

Integrating \eqref{E_R} from $g(t)$ to $t$, and using the fact that the
function $x$ is non-increasing, while the function $g$ defined by \eqref{2.4}
is non-decreasing, and taking into account that 
\begin{equation*}
\tau _{i}(t)\leq g(t)\text{ \ \ and \ \ }x(\tau _{i}(s))\geq
x(g(t))a_{r}(g(t),\tau _{i}(s)),
\end{equation*}%
we obtain, for sufficiently large $t$, 
\begin{eqnarray*}
x(g(t)) &=&x(t)+\int_{g(t)}^{t}\sum_{i=1}^{m}p_{i}(\zeta )x(\tau _{i}(\zeta
))~d\zeta \\
& > &\int_{g(t)}^{t}\sum_{i=1}^{m}p_{i}(\zeta )x(\tau _{i}(\zeta ))~d\zeta \\
&\geq &x(g(t))\int_{g(t)}^{t}\sum_{i=1}^{m}p_{i}(\zeta )a_{r}(g(t),\tau
_{i}(\zeta ))~d\zeta .
\end{eqnarray*}%
Hence 
\begin{equation*}
x(g(t))\left[ 1-\int_{g(t)}^{t}\sum_{i=1}^{m}p_{i}(\zeta )a_{r}(g(t),\tau
_{i}(\zeta ))~d\zeta \right] \geq 0,
\end{equation*}%
which implies 
\begin{equation*}
\limsup_{t\rightarrow \infty }\int_{g(t)}^{t}\sum_{i=1}^{m}p_{i}(\zeta
)a_{r}(g(t),\tau _{i}(\zeta ))~d\zeta \leq 1.
\end{equation*}%
The last inequality contradicts \eqref{2.10}, and the proof is complete.
\end{proof}

The following example illustrates the significance of the condition $%
\lim\limits_{t \rightarrow \infty }\tau_{i}(t)=\infty $, $1\leq i\leq m$, in
Theorem~\ref{theorem2.4}.

\begin{example}
\label{example2} Consider the delay differential equation \eqref{1.3} with 
\begin{equation*}
p(t)\equiv 2,\;\;\tau (t)=\left\{ 
\begin{array}{ll}
-1, & \text{\ \ if }t\in \lbrack 2k,2k+1), \\ 
t, & \text{\ \ if }t\in \lbrack 2k+1,2k+2),%
\end{array}%
\right. \text{\ \ \ }k\in \mathbb{N}_{0}=\{ 0,1,2,\dots \} \;.
\end{equation*}%
By \eqref{2.4}, we find 
\begin{equation*}
g(t)=\sup_{0\leq s\leq t}\tau (s)=\left\{ 
\begin{array}{ll}
\lbrack t], & \text{\ \ if }t\in \lbrack 2k,2k+1), \\ 
t, & \text{\ \ if }t\in \lbrack 2k+1,2k+2),%
\end{array}%
\right. k\in \mathbb{N}_{0}.
\end{equation*}%
If $t=2k+0.8$, then $g(t)=[t]=2k$ and 
\begin{equation*}
\int_{g(t)}^{t}p(\zeta )~d\zeta =\int_{g(2k+0.8)}^{2k+0.8}p(\zeta )~d\zeta
=2\int_{2k}^{2k+0.8}~d\zeta =1.6>1,
\end{equation*}%
which means that \eqref{2.10} is satisfied for any $r$.

However, equation \eqref{1.3} has a nonoscillatory solution 
\begin{equation*}
x(t)=\varphi(t)=t+1, ~~t \in [-1,0], \;\; x(t)= \left\{ 
\begin{array}{ll}
e^{-2[t]}, & \text{\ \ if } t \in [2k,2k+1), \\ 
e^{-2(t-k-1)}, & \text{\ \ if } t \in [2k+1,2k+2),%
\end{array}
\right.
\end{equation*}
which illustrates the significance of the condition $\lim\limits_{t
\rightarrow \infty }\tau (t)=\infty $ in Theorem~\ref{theorem2.4}. 
\end{example}

In 1992, Yu et al. \cite{20a} proved the following result.

\begin{lemma}
\label{lemma_for_max_decrease} In addition to the hypothesis (1.3), assume
that $g(t)$ is defined by \eqref{2.4}, 
\begin{equation}
0<\alpha :=\liminf_{t\rightarrow
\infty}\int_{g(t)}^{t}\sum_{i=1}^{m}p_{i}(s)\,ds\leq \frac{1}{e},
\label{alpha_def}
\end{equation}%
and $x(t)$ is an eventually positive solution of \eqref{E_R}. Then 
\begin{equation}
\liminf_{t\rightarrow \infty }\frac{x(t)}{x(g(t))}\geq \frac{1-\alpha -\sqrt{%
1-2\alpha -\alpha ^{2}}}{2},  \label{add_del_1}
\end{equation}
\end{lemma}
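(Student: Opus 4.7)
Since $p_i\geq 0$ and $x(t)>0$ for $t$ large, equation~\eqref{E_R} forces $x'(t)\leq 0$ eventually; thus $x$ is non-increasing on a half-line, and $w:=\liminf_{t\to\infty}x(t)/x(g(t))\in[0,1]$ is well defined. The goal is to show $w\geq K(\alpha)$, and a direct computation identifies $K(\alpha)$ as the smaller root of the convex quadratic $Q(X):=2X^{2}-2(1-\alpha)X+\alpha^{2}$. Since $w\leq 1$, it suffices to prove $Q(w)\leq 0$, equivalently $2w(1-\alpha-w)\geq\alpha^{2}$.

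I would follow the two-stage splitting argument of Yu, Wang, Zhang and Qian. First I fix small $\epsilon>0$ and observe that, by the hypotheses, for all sufficiently large $s$ we have both $\int_{g(s)}^{s}P(u)\,du\geq\alpha-\epsilon$ (with $P:=\sum_{i=1}^{m}p_i$) and $x(s)\geq(w-\epsilon)\,x(g(s))$. Next I would select a sequence $t_n\to\infty$ with $x(t_n)/x(g(t_n))\to w$, and for each $n$ pick an interior point $\lambda_n\in[g(t_n),t_n]$ at which the $P$-mass of $[g(t_n),t_n]$ is split evenly into two halves, each of size $\geq(\alpha-\epsilon)/2$. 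Integrating \eqref{E_R} separately over $[g(t_n),\lambda_n]$ and $[\lambda_n,t_n]$, using the pointwise lower bounds $x(\tau_i(s))\geq x(g(\lambda_n))$ on the first subinterval and $x(\tau_i(s))\geq x(g(t_n))$ on the second (both consequences of $\tau_i(s)\leq g(s)$ and the monotonicity of $g$), gives the pair of inequalities
\[
x(\lambda_n)\geq x(t_n)+x(g(t_n))\tfrac{\alpha-\epsilon}{2},\qquad x(g(t_n))\geq x(\lambda_n)+x(g(\lambda_n))\tfrac{\alpha-\epsilon}{2}.
\]

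The final and most delicate step is to combine these two estimates with the liminf condition $x(\lambda_n)\geq(w-\epsilon)\,x(g(\lambda_n))$ applied at the interior point $\lambda_n$ and then pass to the limit $n\to\infty$ followed by $\epsilon\to 0$, so as to extract the quadratic $Q(w)\leq 0$. The main obstacle is precisely this self-referential closing: a naive combination of the two inequalities using only the trivial $x(g(\lambda_n))\geq x(g(t_n))$ recovers merely the coarse first-order bound $w\leq 1-\alpha$, and producing the critical $\alpha^{2}$ cross-term in the quadratic requires feeding the liminf hypothesis at $\lambda_n$ back into the estimates in the careful manner of the classical Yu--Wang--Zhang--Qian argument. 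Once the two $(\alpha-\epsilon)/2$ contributions multiply with the $(w-\epsilon)$ factor coming from the liminf at $\lambda_n$, the sought quadratic $2w(1-\alpha-w)\geq\alpha^{2}$ drops out in the limit, and $w\geq K(\alpha)$ follows as the smaller root of $Q$.
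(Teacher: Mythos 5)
The paper itself does not prove this lemma --- it is quoted from Yu, Wang, Zhang and Qian \cite{20a} --- so the only question is whether your outline would close into a valid proof. As written, it would not: the step you yourself flag as ``the most delicate'' is exactly where the proof lives, and the ingredients you have assembled cannot produce it. Write $A=x(t_n)$, $B=x(\lambda_n)$, $C=x(g(t_n))$, $D=x(g(\lambda_n))$ and $\beta=(\alpha-\epsilon)/2$. Your two integrated inequalities are $B\geq A+\beta C$ and $C\geq B+\beta D$, and with the monotonicity chain $D\geq C\geq B\geq A$ they yield only $A\leq(1-2\beta)C$, i.e.\ the upper bound $w\leq 1-\alpha$ you mention --- which is a bound in the \emph{opposite} direction from the one the lemma asserts. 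The liminf hypothesis at the interior point, $B\geq(w-\epsilon)D$, is an \emph{upper} bound on $D$ in terms of $B$; but $D$ sits in $C\geq B+\beta D$ in a position where only a \emph{lower} bound on $D$ could sharpen anything. Feeding the liminf back in therefore weakens rather than strengthens your estimates, and no algebraic combination of these three inequalities makes the $\alpha^{2}$ cross term ``drop out'' with the right sign. The root of the problem is the orientation of your intervals: integrating \eqref{E_R} over subintervals of $[g(t_n),t_n]$ expresses $x(\mathrm{earlier})$ as $x(\mathrm{later})$ plus a positive integral, so it can only bound $x(t_n)/x(g(t_n))$ from above.

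The argument of \cite{20a} runs the other way. For each large $t$ one picks $h_t>t$ with $\int_t^{h_t}\sum_i p_i(s)\,ds=(\alpha-\epsilon)/2$, observes that necessarily $g(h_t)\leq t$, and integrates \eqref{E_R} over $[t,h_t]$ and over $[g(h_t),t]$. This gives $x(t)\geq x(h_t)+\beta\,x(g(h_t))$ and $x(g(h_t))\geq x(t)+\beta\,x(g(t))$, hence first the crude but indispensable bound $x(t)\geq\beta^{2}x(g(t))$, so $w\geq\alpha^{2}/4>0$; only then does the estimate $x(h_t)\geq(w-\epsilon)x(g(h_t))$ enter with the correct sign, and a bootstrap yields a quadratic inequality in $w$. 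Even so, the fixed median split only delivers $w^{2}-(1-\alpha)w+\alpha^{2}/4\leq 0$, i.e.\ $w\geq\bigl(1-\alpha-\sqrt{1-2\alpha}\,\bigr)/2$; obtaining the sharper constant $\alpha^{2}/2$ in the quadratic, and hence the stated bound with $\sqrt{1-2\alpha-\alpha^{2}}$, requires letting the split point vary and optimizing rather than fixing it at the median. So the gap is not a routine computation left to the reader: both the direction of the integrations and the extraction of the sharp constant are missing from your proposal.
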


Based on inequality \eqref{add_del_1}, we establish the following theorem.

\begin{theorem}
\label{theorem2.4_abc} Assume that $p_{i}(t)\geq 0$, $1\leq i\leq m$, $g(t)$
is defined by \eqref{2.4}, $a_{r}(t,s)$ by \eqref{2.8},\eqref{2.7}and (2.8)
holds. If for some $r\in {\mathbb{N}}$ 
\begin{equation}
\limsup_{t\rightarrow \infty }\int_{g(t)}^{t}\sum_{i=1}^{m}p_{i}(\zeta
)a_{r}(g(t),\tau _{i}(\zeta ))~d\zeta >1-\frac{1-\mathfrak{\alpha }-\sqrt{1-2%
\mathfrak{\alpha }-\mathfrak{\alpha }^{2}}}{2},  \label{add_del_2}
\end{equation}
then all solutions of \eqref{E_R} oscillate.
\end{theorem}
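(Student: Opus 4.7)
The plan is to follow the same skeleton as the proof of Theorem~\ref{theorem2.4}, but to avoid throwing away the boundary term $x(t)$ in the integration-from-$g(t)$-to-$t$ step; instead, I will keep it and turn it into the quantitative improvement encoded in the right-hand side of \eqref{add_del_2} by invoking \lemref{lemma_for_max_decrease}.

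Concretely, I would argue by contradiction: suppose $x(t)$ is an eventually positive solution of \eqref{E_R}. As in the proof of \thmref{theorem2.4}, $x$ is eventually non-increasing, $g$ is non-decreasing with $g(t)\le t$, $\tau_i(t)\le g(t)$, and \lemref{lemma2.1} yields $x(\tau_i(s))\ge x(g(t))\,a_r(g(t),\tau_i(s))$ for $s\in[g(t),t]$ and all $r\in\mathbb{N}$. Integrating \eqref{E_R} from $g(t)$ to $t$ then gives, for $t$ large,
\begin{equation*}
x(g(t)) \;=\; x(t)+\int_{g(t)}^{t}\sum_{i=1}^{m}p_i(\zeta)\,x(\tau_i(\zeta))\,d\zeta
\;\ge\; x(t) + x(g(t))\int_{g(t)}^{t}\sum_{i=1}^{m}p_i(\zeta)\,a_r(g(t),\tau_i(\zeta))\,d\zeta.
\end{equation*}
Dividing by $x(g(t))>0$ produces the key inequality
\begin{equation*}
1 \;\ge\; \frac{x(t)}{x(g(t))} + \int_{g(t)}^{t}\sum_{i=1}^{m}p_i(\zeta)\,a_r(g(t),\tau_i(\zeta))\,d\zeta.
\end{equation*}

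Next, I would take $\limsup$ as $t\to\infty$ and use the elementary fact $\limsup(A(t)+B(t))\ge \liminf A(t)+\limsup B(t)$ together with \lemref{lemma_for_max_decrease}: since \eqref{alpha_def} holds, we have $\liminf_{t\to\infty} x(t)/x(g(t))\ge (1-\alpha-\sqrt{1-2\alpha-\alpha^2})/2$. This yields
\begin{equation*}
\limsup_{t\to\infty}\int_{g(t)}^{t}\sum_{i=1}^{m}p_i(\zeta)\,a_r(g(t),\tau_i(\zeta))\,d\zeta \;\le\; 1-\frac{1-\alpha-\sqrt{1-2\alpha-\alpha^2}}{2},
\end{equation*}
in direct contradiction to hypothesis \eqref{add_del_2}. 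This completes the argument.

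There is essentially no hard obstacle: \lemref{lemma2.1} is already proved and \lemref{lemma_for_max_decrease} is quoted, so the whole improvement comes from simply not discarding $x(t)$ when bounding $x(g(t))$ from below. The only subtle points are routine: verifying that $0<\alpha\le 1/e$ must be assumed (this is what I take the garbled phrase ``and (2.8) holds'' in the statement to mean, namely \eqref{alpha_def}), checking that \lemref{lemma_for_max_decrease} applies because $x$ is eventually positive, and being careful to combine a $\liminf$ and a $\limsup$ correctly when passing to the limit in the displayed inequality.
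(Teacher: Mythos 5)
Your proposal is correct and follows essentially the same route as the paper's own proof: integrate \eqref{E_R} from $g(t)$ to $t$, retain the $x(t)$ term, apply Lemma~\ref{lemma2.1} to bound $x(\tau_i(\zeta))$ from below by $x(g(t))a_r(g(t),\tau_i(\zeta))$, and then pass to the limit using Lemma~\ref{lemma_for_max_decrease} to reach a contradiction with \eqref{add_del_2}. Your reading of the garbled hypothesis as \eqref{alpha_def} also matches the paper's intent, and your handling of the $\liminf$/$\limsup$ combination is equivalent to the paper's rearrangement $\int\le 1-x(t)/x(g(t))$ followed by taking $\limsup$.
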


\begin{proof}
Assume, for the sake of contradiction, that there exists a nonoscillatory
solution $x(t)$ of \eqref{E_R}. Then, as in the proof of Theorem~\ref%
{theorem2.4}, we obtain, for sufficiently large $t$, 
\begin{eqnarray*}
x(g(t)) &=&x(t)+\int_{g(t)}^{t}\sum_{i=1}^{m}p_{i}(\zeta )x(\tau
_{i}(\zeta))~d\zeta \\
&\geq &x(t)+x(g(t))\int_{g(t)}^{t}\sum_{i=1}^{m}p_{i}(\zeta )a_{r}(g(t),\tau
_{i}(\zeta ))~d\zeta .
\end{eqnarray*}%
That is, 
\begin{equation*}
\int_{g(t)}^{t}\sum_{i=1}^{m}p_{i}(\zeta )a_{r}(g(t),\tau _{i}(\zeta
))~d\zeta \leq 1-\frac{x(t)}{x(g(t))},
\end{equation*}%
which gives 
\begin{equation*}
\limsup_{t\rightarrow \infty }\int_{g(t)}^{t}\sum_{i=1}^{m}p_{i}(\zeta
)a_{r}(g(t),\tau _{i}(\zeta ))~d\zeta \leq 1-\liminf_{t\rightarrow \infty }%
\frac{x(t)}{x(g(t))}.
\end{equation*}%
Taking into account that \eqref{add_del_1} holds, the last inequality leads
to 
\begin{equation*}
\limsup_{t\rightarrow \infty }\int_{g(t)}^{t}\sum_{i=1}^{m}p_{i}(\zeta
)a_{r}(g(t),\tau _{i}(\zeta ))~d\zeta \leq 1-\frac{1-\mathfrak{\alpha }-%
\sqrt{1-2\mathfrak{\alpha }-\mathfrak{\alpha }^{2}}}{2}\text{,}
\end{equation*}%
which contradicts condition \eqref{add_del_2}.

The proof of the theorem is complete.
\end{proof}


Next, let us proceed to an oscillation condition involving $\liminf$.

\begin{theorem}
\label{theorem3.3} Assume that $p_{i}(t) \geq 0$, $1\leq i\leq m$, %
\eqref{1.1} holds and $a_{r}(t,s)$ are defined by \eqref{2.7},\eqref{2.8}.
If for some $r\in \mathbb{N}$ 
\begin{equation}
\liminf_{t\rightarrow \infty }\int_{g(t)}^{t}\sum_{i=1}^{m}p_{i}(\zeta)
a_{r}(g(\zeta),\tau _{i}(\zeta ))\,d\zeta >\frac{1}{e},  \label{3.3}
\end{equation}
then all solutions of \eqref{E_R} oscillate.
\end{theorem}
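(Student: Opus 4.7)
The plan is to argue by contradiction, assuming \eqref{E_R} admits an eventually positive solution $x(t)$. Exactly as in Theorems~\ref{theorem2.4} and~\ref{theorem2.4_abc}, $x$ is then eventually non-increasing. Since $\tau_i(\zeta)\le g_i(\zeta)\le g(\zeta)\le\zeta$, Lemma~\ref{lemma2.1} supplies the lower bound $x(\tau_i(\zeta))\ge x(g(\zeta))\,a_r(g(\zeta),\tau_i(\zeta))$, and substituting this into \eqref{E_R} gives the pointwise workhorse inequality
\[
-x'(\zeta)\ \ge\ A(\zeta)\,x(g(\zeta)),\qquad A(\zeta):=\sum_{i=1}^{m} p_i(\zeta)\,a_r(g(\zeta),\tau_i(\zeta)).
\]
Let $\omega:=\liminf_{t\to\infty}x(g(t))/x(t)\in[1,\infty]$ and $c:=\liminf_{t\to\infty}\int_{g(t)}^{t}A(\zeta)\,d\zeta>1/e$. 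I would split the argument according to whether $\omega$ is finite, in the spirit of the classical Koplatadze--Chanturia dichotomy.

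If $\omega<\infty$, pick any $\omega'\in(1,\omega)$; eventually $x(g(\zeta))\ge\omega' x(\zeta)$, so the workhorse becomes $-(\ln x)'(\zeta)\ge\omega' A(\zeta)$. Integrating on $[g(t),t]$, passing to $\liminf$ as $t\to\infty$, and then letting $\omega'\uparrow\omega$ yields the real inequality $\ln\omega\ge c\omega$. But $\min_{\lambda>0}(c\lambda-\ln\lambda)=1+\ln c>0$ when $c>1/e$, so no $\omega\ge 1$ can satisfy it, a contradiction.

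If instead $\omega=\infty$, fix $M>0$ arbitrarily large and choose $t$ so large that $x(g(s))\ge M\,x(s)$ for every $s\ge g(t)$ and $\int_{g(t)}^{t} A\ge c$. Absolute continuity of $\zeta\mapsto\int_{g(t)}^{\zeta}A$ furnishes a splitting point $t^{\ast}\in(g(t),t)$ with $\int_{g(t)}^{t^{\ast}}A=\int_{t^{\ast}}^{t}A\ge c/2$. Integrating the workhorse on $[g(t),t^{\ast}]$ and bounding $x(g(\zeta))\ge x(g(t^{\ast}))$ (since $g$ is non-decreasing and $x$ is decreasing) gives $x(g(t))\ge (c/2)\,x(g(t^{\ast}))$; the analogous integration on $[t^{\ast},t]$ with the bound $x(g(\zeta))\ge x(g(t))$ gives $x(t^{\ast})\ge (c/2)\,x(g(t))$. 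Chaining these with $x(g(t^{\ast}))\ge Mx(t^{\ast})$ produces
\[
x(g(t))\ \ge\ \frac{c}{2}\,x(g(t^{\ast}))\ \ge\ \frac{Mc}{2}\,x(t^{\ast})\ \ge\ \frac{Mc^{2}}{4}\,x(g(t)),
\]
so $M\le 4/c^{2}$, contradicting the arbitrariness of $M$.

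The main technical obstacle will be the branch $\omega=\infty$: one has to argue carefully for the existence of the midpoint $t^{\ast}$ (using absolute continuity of the integral of $A$) and track precisely which value of $g$ supplies the lower bound on $x(g(\zeta))$ on each subinterval. The finite branch is essentially the classical Koplatadze--Chanturia estimate applied to the enriched integrand $A(\zeta)$, and the passage from \eqref{E_R} to the workhorse inequality parallels the reasoning already present in Theorems~\ref{theorem2.4} and~\ref{theorem2.4_abc}, with $a_{r}(g(\zeta),\tau_{i}(\zeta))$ replacing $a_{r}(g(t),\tau_{i}(\zeta))$.
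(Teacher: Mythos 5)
Your proposal is correct, and its decisive second half coincides with the paper's own argument: the same splitting point $t^{\ast}$ (the paper's $t_m$) with $\int_{g(t)}^{t^{\ast}}A\ge c/2$ and $\int_{t^{\ast}}^{t}A\ge c/2$, the same two integrations yielding $x(t^{\ast})\ge \tfrac{c}{2}x(g(t))$ and $x(g(t))\ge \tfrac{c}{2}x(g(t^{\ast}))$, and the same final bound $x(g(t^{\ast}))\le \tfrac{4}{c^{2}}x(t^{\ast})$. Where you diverge is in how you establish that $x(g(t))/x(t)$ must exceed $4/c^{2}$: the paper iterates the logarithmic estimate \eqref{2.12} to get $x(g(t))/x(t)\ge e^{c}\ge ec$, then $(ec)^{2}$, and by induction $(ec)^{k}>4/c^{2}$ for a suitable $k$; you instead set $\omega=\liminf x(g(t))/x(t)$ and run the Koplatadze--Chanturia dichotomy, disposing of the branch $\omega<\infty$ in one shot via $\ln\omega\ge c\omega$ and the fact that $\min_{\lambda>0}(c\lambda-\ln\lambda)=1+\ln c>0$ for $c>1/e$. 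Both routes are sound; yours avoids the induction (whose step the paper only sketches with ``repeating the above argument'') at the cost of the case split, while the paper's iteration gives the quantitative growth $(ec)^{k}$ directly. Two small points to tidy up: you should first note that $\omega>1$ (immediate from \eqref{2.12} with the trivial bound $x(g(\zeta))\ge x(\zeta)$, which gives $\omega\ge e^{c}$) before selecting $\omega'\in(1,\omega)$, since that interval is otherwise potentially empty; and, as in the paper, one should observe that the hypothesis \eqref{3.3} is stated with $a_{r}(g(\zeta),\tau_{i}(\zeta))$, which is dominated by $a_{r}(g(t),\tau_{i}(\zeta))$ for $\zeta\le t$ by monotonicity of $a_r$ in its first argument --- your choice to keep $A(\zeta)=\sum_i p_i(\zeta)a_{r}(g(\zeta),\tau_{i}(\zeta))$ throughout actually sidesteps this conversion cleanly.
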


\begin{proof}
Assume, for the sake of contradiction, that there exists a nonoscillatory
solution $x(t)$ of \eqref{E_R}. Similarly to the proof of Theorem~\ref%
{theorem2.4}, we can confine our discussion only to the case of $x(t)$ being
eventually positive. Then there exists $t_{1}> 0$ such that $x(t)>0$ and $%
x\left( \tau _{i}(t)\right) >0$ for all $t\geq t_{1}$. Thus, from \eqref{E_R}
we have 
\begin{equation*}
x^{\prime }(t)=-\sum_{i=1}^{m}p_{i}(t)x(\tau _{i}(t))\leq 0,\text{ \ \ for
all }t\geq t_{1},
\end{equation*}
which means that $x(t)$ is an eventually non-increasing positive function.

For $t\geq t_{1}$, \eqref{E_R} can be rewritten as 
\begin{equation*}
\frac{x^{\prime }(t)}{x\left( t\right) }+\sum_{i=1}^{m}p_{i}(t)\frac{x\left(
\tau _{i}(t)\right) }{x\left( t\right) }=0,\text{ \ \ for all }t\geq t_{1}%
\text{.}
\end{equation*}
Integrating from $g(t)$ to $t$ gives 
\begin{equation*}
\ln \left( \frac{x\left( t\right) }{x\left( g(t)\right) }\right)
+\int\limits_{g(t)}^{t}\sum_{i=1}^{m}p_{i}(\zeta )\frac{x\left( \tau
_{i}(\zeta )\right) }{x\left( \zeta \right) }d\zeta =0\text{ \ \ for all }%
t\geq t_{2}\geq t_{1}\text{.}
\end{equation*}

Since $g(t)\geq \tau _{i}(\zeta )$, by Lemma~\ref{lemma2.1} we have $x(\tau
_{i}(\zeta))\geq a_{r}(g(t),\tau _{i}(\zeta ))x(g(t))$, and therefore 
\begin{equation*}
\ln \left( \frac{x(t)}{x(g(t))}\right)
+\int_{g(t)}^{t}\sum_{i=1}^{m}p_{i}(\zeta )a_{r}(g(t),\tau _{i}(\zeta ))%
\frac{x(g(t))}{x(\zeta )}\,d\zeta \leq 0. 
\end{equation*}
In view of $x(g(t))\geq x(\zeta )$, the last inequality becomes 
\begin{equation}
\ln \left( \frac{x(t)}{x(g(t))}\right)
+\int_{g(t)}^{t}\sum_{i=1}^{m}p_{i}(\zeta )a_{r}(g(t),\tau _{i}(\zeta
))\,d\zeta \leq 0\text{.}  \label{2.12}
\end{equation}

Also, from \eqref{3.3} it follows that there exists a constant $c>0$ such
that for some $t_3 \geq t_2$ 
\begin{equation}
\label{2.13_abc}
\int_{g(t)}^{t}\sum_{i=1}^{m} p_{i}(\zeta) a_{r}(g(\zeta),\tau _{i}(\zeta
))\,d\zeta \geq c >\frac{1}{e}, \quad t \geq t_3 \geq t_2.
\end{equation}
For a fixed $s$, the function $a_r(t,s)$ is non-decreasing it $t$, $g$ is also non-decreasing, 
therefore for $\zeta \leq t$,
$a_{r}(g(t),\tau _{i}(\zeta)) \geq a_{r}(g(\zeta),\tau _{i}(\zeta))$. Hence
\begin{equation}
\int_{g(t)}^{t}\sum_{i=1}^{m} p_{i}(\zeta) a_{r}(g(t),\tau _{i}(\zeta
))\,d\zeta \geq c >\frac{1}{e}, \quad t \geq t_3 \geq t_2.  \label{2.13new}
\end{equation}
Combining inequalities \eqref{2.12} and \eqref{2.13new}, we obtain 
\begin{equation*}
\ln \left( \frac{x(t)}{x(g(t))} \right) +c \leq 0, \quad t \geq t_3.
\end{equation*}
Thus 
\begin{equation*}
\frac{x(g(t))}{x(t)}\geq e^{c}\geq ec>1,
\end{equation*}
which implies for some $t \geq t_4 \geq t_3$ 
\begin{equation*}
(ec)x(t) \leq x(g(t)). 
\end{equation*}
Repeating the above argument leads to a new estimate $x(g(t))/x(t)>(ec)^{2}$, for $t$ large enough. 
Continuing by induction, we get 
\begin{equation*}
\frac{x(g(t))}{x(t)} \geq (ec)^{k}, \mbox{~~ for sufficiently large ~~}t, 
\end{equation*}
where $ec>1$. As $ec>1$, there is $k\in {\mathbb{N}}$ satisfying $k >
2(\ln(2)-\ln(c))/(1+\ln(c))$ such that for $t$ large enough 
\begin{equation}
\frac{x(g(t))}{x(t)}\geq (ec)^{k} > \frac{4}{c^{2}}.  \label{3.3b}
\end{equation}

Further, integrating \eqref{E_R} from $g(t)$ to $t$ yields 
\begin{equation*}
x(g(t)) - x(t)-\int_{g(t)}^t \sum_{i=1}^{m} p_{i}(\zeta) x(\tau_i(\zeta))\,
d \zeta =0.
\end{equation*}
Inequality \eqref{2.9} in Lemma~\ref{lemma2.1} used in the above equality
leads to 
the differential inequality 
\begin{equation*}
x(g(t)) - x(t)- x(g(t)) \int_{g(t)}^t \sum_{i=1}^{m} p_{i}(\zeta)a_r
(g(t),\tau_i(\zeta)) \, d \zeta \geq 0.
\end{equation*}
The strict inequality is valid if we omit $x(t)>0$ in the left-hand side: 
\begin{equation*}
x(g(t))\left [ 1 - \int_{g(t)}^t \sum_{i=1}^{m} p_{i}(\zeta)a_r
(g(t),\tau_i(\zeta))\, d \zeta \right] > 0.
\end{equation*}
From \eqref{2.13_abc}, for large enough $t$,
\begin{equation}
0 < c \leq \int_{g(t)}^{t} \sum_{i=1}^{m} p_{i}(\zeta) a_r
(g(\zeta),\tau_i(\zeta))\, d\zeta < 1.  \label{3.3c}
\end{equation}
Taking the integral on $[g(t),t]$ which is not less than $c$, we split the
interval into two parts where integrals are not less than $c/2$, let $t_m
\in (g(t),t) $ be the splitting point: 
\begin{equation*}
\int_{g(t)}^{t_m} \sum_{i=1}^{m} p_{i}(\zeta) a_r (g(\zeta),\tau_i(\zeta))\,
d\zeta \geq \frac{c}{2}, \quad \int_{t_m}^{t} \sum_{i=1}^{m} p_{i}(\zeta)
a_r (g(\zeta),\tau_i(\zeta))\, d\zeta \geq \frac{c}{2}.
\end{equation*}
Since $g(\zeta) \leq g(t_m)$ in the first integral, 
and $g(\zeta) \leq g(t)$ in the second one, 
we obtain 
\begin{equation}
\int_{g(t)}^{t_m} \sum_{i=1}^{m} p_{i}(\zeta) a_r (g(t_m),\tau_i(\zeta))\,
d\zeta \geq \frac{c}{2}, \quad \int_{t_m}^{t} \sum_{i=1}^{m} p_{i}(\zeta)
a_r (g(t),\tau_i(\zeta))\, d\zeta \geq \frac{c}{2}.  \label{3.3d}
\end{equation}
Integrating \eqref{E_R} from $t_m$ to $t$, along with incorporating the
inequality $x(\tau _{i}(\zeta))\geq a_{r}(g(t),\tau _{i}(\zeta ))x(g(t))$,
gives 
\begin{equation*}
-x(t_m)+x(t) + x(g(t)) \int_{t_m}^{t} \sum_{i=1}^{m} p_{i}(\zeta) a_r
(g(t),\tau_i(\zeta)) \leq 0.
\end{equation*}
Together with the second inequality in \eqref{3.3d}, this implies 
\begin{equation}
x(t_m) \geq \frac{c}{2} x(g(t)).  \label{3.3e}
\end{equation}
%
%
Similarly, integration of \eqref{E_R} from $g(t)$ to $t_m$ with a later
application of Lemma~\ref{lemma2.1} leads to 
\begin{equation*}
x(t_m)-x(g(t)) + x(g(t_m)) \int_{g(t)}^{t_m} \sum_{i=1}^{m} p_{i}(\zeta) a_r
(g(t_m),\tau_i(\zeta))\, d\zeta\leq 0,
\end{equation*}
which together with the first inequality in \eqref{3.3d} yields that 
\begin{equation}
x(g(t)) \geq \frac{c}{2} x(g(t_m))  \label{3.3f}
\end{equation}
Inequalities \eqref{3.3e} and \eqref{3.3f} imply 
\begin{equation*}
x(g(t_m)) \leq \frac{2}{c} x(g(t)) \leq \frac{4}{c^2} x(t_m),
\end{equation*}
which contradicts \eqref{3.3b}. Thus, all solutions of \eqref{E_R} oscillate.
\end{proof}

As non-oscillation of \eqref{E_R} is equivalent to existence of a positive
or a negative solution of the relevant differentiation inequalities (see,
for example, \cite[Theorem 2.1, p. 25]{ABBD}), Theorems~\ref{theorem2.4},~%
\ref{theorem2.4_abc} and~\ref{theorem3.3} lead to the following result. 

\begin{theorem}
\label{theorem7}
Assume that all the conditions of anyone of Theorems~\ref{theorem2.4},~\ref%
{theorem2.4_abc} and~\ref{theorem3.3} hold. Then

(i) the differential inequality 
\begin{equation*}
x^{\prime }(t)+\sum\limits_{i=1}^{m}p_{i}(t)x\left( \tau _{i}(t)\right) \leq
0,~~t\geq 0
\end{equation*}
has no eventually positive solutions;

(ii) the differential inequality 
\begin{equation*}
x^{\prime }(t)+\sum\limits_{i=1}^{m}p_{i}(t)x\left( \tau _{i}(t)\right) \geq
0,~~t\geq 0
\end{equation*}
has no eventually negative solutions.
\end{theorem}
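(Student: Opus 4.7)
The plan is to reduce both parts to the oscillation conclusions already in hand by exploiting the classical equivalence \cite[Theorem 2.1, p.~25]{ABBD} between oscillation of \eqref{E_R} and nonexistence of eventually one-signed solutions of the corresponding differential inequalities. Under the assumptions of any of Theorems~\ref{theorem2.4}, \ref{theorem2.4_abc}, or~\ref{theorem3.3}, the equation \eqref{E_R} is oscillatory, so that equivalence immediately yields (i) and (ii). Since the cited equivalence is stated for fairly general delay equations, the only checkpoint is that it applies under our standing hypotheses \eqref{1.1}, which it does.

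A more self-contained route, which I would actually spell out, is to re-examine the proofs of Theorems~\ref{theorem2.4}, \ref{theorem2.4_abc}, and~\ref{theorem3.3} and observe that they never use the equality in \eqref{E_R}, only the inequality $x'(t)+\sum_{i=1}^{m}p_i(t)x(\tau_i(t))\leq 0$. Indeed, for an eventually positive $x$ satisfying this inequality, one still gets $x'(t)\leq 0$, so $x$ is eventually nonincreasing, and integrating from $g(t)$ to $t$ yields
$$
x(g(t))\;\geq\; x(t)+\int_{g(t)}^{t}\sum_{i=1}^{m}p_i(\zeta)x(\tau_i(\zeta))\,d\zeta,
$$
which is the key estimate used in each proof. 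Lemma~\ref{lemma2.1} likewise carries over verbatim, because its argument is an application of the Gr\"onwall inequality to a differential \emph{inequality}. Hence the same contradictions with \eqref{2.10}, \eqref{add_del_2}, or \eqref{3.3} are forced, which proves part (i).

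For part (ii), I would use the sign-flip substitution $y(t):=-x(t)$. If $x$ is an eventually negative solution of $x'(t)+\sum_{i=1}^{m}p_i(t)x(\tau_i(t))\geq 0$, then $y$ is eventually positive and, by linearity,
$$
y'(t)+\sum_{i=1}^{m}p_i(t)y(\tau_i(t))\;=\;-\Bigl(x'(t)+\sum_{i=1}^{m}p_i(t)x(\tau_i(t))\Bigr)\;\leq\;0,
$$
so $y$ would be an eventually positive solution of the inequality ruled out in part (i); contradiction.

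There is no real obstacle beyond a routine check. The only step that deserves care is verifying that Lemma~\ref{lemma2.1} and the integral/monotonicity manipulations in the three preceding proofs are robust to replacing ``$=$'' by ``$\leq$''; once this is observed, both conclusions are essentially free. I would therefore present the result as a short remark-style proof, either (a) citing \cite[Theorem 2.1, p.~25]{ABBD} directly, or (b) giving the two-line argument that the previous proofs go through for the inequality together with the $y=-x$ reduction for (ii).
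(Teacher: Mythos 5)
Your first route---invoking the equivalence between nonoscillation of \eqref{E_R} and the existence of an eventually positive or eventually negative solution of the corresponding differential inequalities \cite[Theorem 2.1, p.~25]{ABBD}---is exactly the paper's own argument, which consists of precisely that citation and nothing more. Your self-contained alternative (observing that the proofs of Theorems~\ref{theorem2.4}, \ref{theorem2.4_abc} and~\ref{theorem3.3}, including Lemma~\ref{lemma2.1}, only ever use the inequality $x'(t)+\sum_{i=1}^{m}p_i(t)x(\tau_i(t))\leq 0$ for an eventually positive $x$, together with the substitution $y=-x$ for part (ii)) is also correct, but goes beyond what the paper records.
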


\subsection{Advanced Equations}

Similar oscillation theorems for the (dual) advanced differential equation %
\eqref{E_A} can be derived easily. The proofs of these theorems are omitted,
since they are quite similar to the proofs for the delay equation \eqref{E_R}.

Denote 
\begin{equation}
\rho_{i}(t)=\inf_{s\geq t} \sigma _{i}(s),\text{ \ \ } t\geq 0  \label{2.15}
\end{equation}
and 
\begin{equation}
\rho (t)=\min_{1\leq i\leq m}\rho _{i}(t),\text{ \ \ } t \geq 0\text{.}
\label{2.16}
\end{equation}
Clearly, the functions $\rho (t)$, $\rho _{i}(t)$, $1\leq i \leq m$, are
Lebesgue measurable non-decreasing and $\rho (t) \geq t$, $\rho _{i}(t) \geq
t$, $1\leq i\leq m$ for all $t \geq 0$.

\begin{theorem}
\label{theorem2.4a} Assume that $p_{i}(t)\geq 0$, $1\leq i\leq m$, %
\eqref{1.2} holds, $\rho (t)$ is defined by 
\eqref{2.16} and $b_{r}(t,s)$ are denoted as 
\begin{equation}
b_{1}(t,s):=\exp \left\{ \int_{t}^{s}\sum_{i=1}^{m}p_{i}(\zeta )~d\zeta
\right\}  \label{2.17}
\end{equation}%
and 
\begin{equation}
b_{r+1}(t,s):=\exp \left\{ \int_{t}^{s}\sum_{i=1}^{m}p_{i}(\zeta
)b_{r}(\zeta,\sigma _{i}(\zeta ))~d\zeta \right\} ,\;\;r\in {\mathbb{N}}.
\label{2.18}
\end{equation}%
If for some $r\in 
\mathbb{N}
$%
\begin{equation}
\limsup_{t\rightarrow \infty }\int_{t}^{\rho (t)}\sum_{i=1}^{m}p_{i}(\zeta
)b_{r}(\rho (t),\sigma _{i}(\zeta ))\,d\zeta >1,  \label{2.19}
\end{equation}%
then all solutions of \eqref{E_A} oscillate.
\end{theorem}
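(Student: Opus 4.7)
The plan is to mirror the proof of Theorem~\ref{theorem2.4} exactly, reversing the direction of all monotonicity arguments to accommodate the advanced argument.

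First I would establish the dual of Lemma~\ref{lemma2.1}: if $x(t)$ is an eventually positive solution of \eqref{E_A}, then for all sufficiently large $t$ with $t\leq s$,
\begin{equation*}
x(t)\,b_{r}(t,s)\leq x(s), \qquad r\in\mathbb{N}.
\end{equation*}
The base case comes from the observation that \eqref{E_A} together with $p_{i}\geq 0$ and $x(\sigma_{i}(t))>0$ gives $x'(t)\geq 0$, so $x$ is non-decreasing; hence $x(\sigma_{i}(t))\geq x(t)$ yields
\begin{equation*}
x'(t)-x(t)\sum_{i=1}^{m}p_{i}(t)\geq 0,
\end{equation*}
and Gr\"{o}nwall's inequality (in its increasing form) gives $x(s)\geq x(t)b_{1}(t,s)$ for $t\leq s$. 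The induction step is identical to that in Lemma~\ref{lemma2.1}: substituting $x(\sigma_{i}(\zeta))\geq x(\zeta)b_{r}(\zeta,\sigma_{i}(\zeta))$ into \eqref{E_A} produces
\begin{equation*}
x'(\zeta)\geq x(\zeta)\sum_{i=1}^{m}p_{i}(\zeta)b_{r}(\zeta,\sigma_{i}(\zeta)),
\end{equation*}
and a second Gr\"{o}nwall application delivers $x(t)b_{r+1}(t,s)\leq x(s)$.

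Next I would assume for contradiction that $x(t)$ is an eventually positive solution (negative case by considering $-x$). A small technical check, which I expect to be the one subtle point, is verifying that for all $\zeta\in[t,\rho(t)]$ and all $i$,
\begin{equation*}
\sigma_{i}(\zeta)\geq \rho(t).
\end{equation*}
This follows because $\rho_{i}$ is non-decreasing, so $\sigma_{i}(\zeta)\geq \rho_{i}(\zeta)\geq \rho_{i}(t)\geq \rho(t)$. This inequality is what allows me to apply the dual lemma with the pair $(\rho(t),\sigma_{i}(\zeta))$, yielding $x(\sigma_{i}(\zeta))\geq x(\rho(t))\,b_{r}(\rho(t),\sigma_{i}(\zeta))$.

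Finally I integrate \eqref{E_A} from $t$ to $\rho(t)$:
\begin{equation*}
x(\rho(t))-x(t)=\int_{t}^{\rho(t)}\sum_{i=1}^{m}p_{i}(\zeta)\,x(\sigma_{i}(\zeta))\,d\zeta
\geq x(\rho(t))\int_{t}^{\rho(t)}\sum_{i=1}^{m}p_{i}(\zeta)\,b_{r}(\rho(t),\sigma_{i}(\zeta))\,d\zeta.
\end{equation*}
Dropping the positive term $x(t)$ on the left and dividing by $x(\rho(t))>0$ gives
\begin{equation*}
1>\int_{t}^{\rho(t)}\sum_{i=1}^{m}p_{i}(\zeta)\,b_{r}(\rho(t),\sigma_{i}(\zeta))\,d\zeta,
\end{equation*}
and taking $\limsup_{t\to\infty}$ contradicts \eqref{2.19}. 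The main obstacle is purely notational/directional: keeping careful track of which inequalities reverse (monotonicity of $x$, direction of the Gr\"{o}nwall estimate, orientation of the interval of integration) so that the dual lemma applies at the correct endpoints.
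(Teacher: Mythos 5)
Your proposal is correct and is exactly the dualization of the proof of Theorem~\ref{theorem2.4} that the paper intends (the paper explicitly omits the proofs in the advanced case as "quite similar" to the delay case). The dual of Lemma~\ref{lemma2.1}, the verification that $\sigma_{i}(\zeta)\geq\rho_{i}(\zeta)\geq\rho_{i}(t)\geq\rho(t)$ for $\zeta\in[t,\rho(t)]$, and the integration of \eqref{E_A} over $[t,\rho(t)]$ all match the intended argument, so nothing is missing.
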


We would like to mention that Lemma 2 can be extented to the advanced type
differential equation \eqref{E_A} (cf. \cite[Section 2.6.6]{7}).

\begin{lemma}
\label{lemma_for_max_decr1} In addition to hypothesis \eqref{1.2},
assume that $\rho (t)$ is defined by \eqref{2.16}, 
\begin{equation}  \label{2.25}
0<\alpha :=\liminf_{t\rightarrow \infty }\int_{t}^{\rho
(t)}\sum_{i=1}^{m}p_{i}(s)\,ds\leq \frac{1}{e},
\end{equation}%
and $x(t)$ is an eventually positive solution of \eqref{E_A}. Then 
\begin{equation*}
\liminf_{t\rightarrow \infty }\frac{x(t)}{x(\rho (t))}\geq \frac{1-\mathfrak{%
\alpha }-\sqrt{1-2\mathfrak{\alpha }-\mathfrak{\alpha }^{2}}}{2}.
\end{equation*}
\end{lemma}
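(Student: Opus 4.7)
The plan is to mirror the proof of Lemma~\ref{lemma_for_max_decrease} from Yu et al.\ \cite{20a} for the delay case, making the obvious dual substitutions. Since $x(t)$ is eventually positive, \eqref{E_A} gives $x'(t)=\sum_{i=1}^{m}p_{i}(t)x(\sigma_i(t))\ge 0$ eventually, so $x$ is eventually non-decreasing; this plays the role that non-increasingness played in the delay proof. The key structural identity to exploit is that for every $s\ge t$ (with $t$ large) one has $\sigma_{i}(s)\ge \rho_{i}(t)\ge \rho(t)$ by definition \eqref{2.15}--\eqref{2.16}, and hence
\[
x(\sigma_{i}(s))\ge x(\rho(t))\quad\text{for }s\in[t,\rho(t)],
\]
which is the dual of the inequality $x(\tau_{i}(s))\ge x(g(t))$ on $s\in[g(t),t]$ used in the delay argument.

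Set $\omega:=\liminf_{t\to\infty} x(t)/x(\rho(t))\in[0,1]$ and pick a sequence $T_n\uparrow\infty$ along which the ratio converges to $\omega$. Using \eqref{2.25}, for each sufficiently large $n$ I would introduce a splitting point $T_n^{*}\in(T_n,\rho(T_n))$ determined by the integral-balance condition
\[
\int_{T_n}^{T_n^{*}}\sum_{i=1}^{m}p_{i}(s)\,ds \;=\; \int_{T_n^{*}}^{\rho(T_n)}\sum_{i=1}^{m}p_{i}(s)\,ds,
\]
whose common value tends to $\alpha/2$; such $T_n^{*}$ exists by continuity of the integral in its upper limit. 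Integrating \eqref{E_A} over each subinterval and applying the lower bounds $x(\sigma_{i}(s))\ge x(\rho(T_n))$ on $[T_n,T_n^{*}]$ and $x(\sigma_{i}(s))\ge x(\rho(T_n^{*}))$ on $[T_n^{*},\rho(T_n)]$ (using $\rho$ non-decreasing) yields two inequalities linking $x(T_n),\,x(T_n^{*}),\,x(\rho(T_n^{*})),\,x(\rho(T_n))$.

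Passing to a further subsequence along which all four ratios $x(T_n)/x(\rho(T_n))$, $x(T_n^{*})/x(\rho(T_n))$, $x(T_n^{*})/x(\rho(T_n^{*}))$, and $x(\rho(T_n^{*}))/x(\rho(T_n))$ converge, and using the elementary bounds $\omega\le x(T_n^{*})/x(\rho(T_n^{*}))$ and the intermediate ratio $\le 1$, the two integrated inequalities combine into the quadratic constraint
\[
\omega^{2}-(1-\alpha)\omega+\frac{\alpha^{2}}{2}\ge 0.
\]
The roots of $y^{2}-(1-\alpha)y+\alpha^{2}/2$ are precisely $(1-\alpha\pm\sqrt{1-2\alpha-\alpha^{2}})/2$, both real and in $[0,1]$ under \eqref{2.25}; the trivial bound $\omega\le 1-\alpha$ (obtained by integrating \eqref{E_A} once over $[t,\rho(t)]$) rules out the larger root, leaving $\omega\ge (1-\alpha-\sqrt{1-2\alpha-\alpha^{2}})/2$, as desired.

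The step I expect to be the most delicate is the last one, namely deriving the correct quadratic: the four subsequential limits do not in general equal $\omega$, so one must keep track of them separately and exploit monotonicity of $x$ and $\rho$ together with $\omega\le$ (each of them) to eliminate the auxiliary limits and produce a single inequality in $\omega$. Existence of the splitting point $T_n^{*}$ is also worth stating carefully, as $\sum p_i$ is only locally integrable, but this is standard.
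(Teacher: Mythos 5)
A preliminary remark: the paper does not prove this lemma at all --- it is quoted as the advanced-type analogue (cf.\ \cite[Section 2.6.6]{7}) of Lemma~\ref{lemma_for_max_decrease}, which is itself cited to Yu et al.\ \cite{20a}. So your argument has to stand on its own, and as written it has a genuine gap in precisely the step you flag as ``the most delicate.''

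The concluding logic is backwards. Write $r_{\pm}=\frac{1-\alpha\pm\sqrt{1-2\alpha-\alpha^{2}}}{2}$ for the roots of $Q(y)=y^{2}-(1-\alpha)y+\alpha^{2}/2$. If your two integrated inequalities really produced $Q(\omega)\ge 0$, you would only learn that $\omega$ lies \emph{outside} the interval $(r_{-},r_{+})$; excluding the branch $\omega\ge r_{+}$ would then leave $\omega\le r_{-}$, which is an upper bound --- the opposite of the assertion. (Moreover, since $\sqrt{1-2\alpha-\alpha^{2}}\le 1-\alpha$ one has $r_{+}\le 1-\alpha$, so the ``trivial bound'' $\omega\le 1-\alpha$ does not even exclude $\omega\ge r_{+}$.) What the lemma actually requires is the reversed constraint $Q(\omega)\le 0$, i.e.\ $\omega(1-\alpha-\omega)\ge\alpha^{2}/2$, which places $\omega$ \emph{between} the roots and yields the conclusion with no root-exclusion step at all. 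As long as your derivation lands on $Q(\omega)\ge 0$, no amount of auxiliary information will convert it into the desired lower bound.

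Independently of the sign issue, the equal-mass splitting device does not reach the sharp constant. Integrating \eqref{E_A} over $[T^{*},\rho(T)]$ and over $[T,T^{*}]$ gives $x(\rho(T))\ge x(T^{*})+\frac{A}{2}x(\rho(T^{*}))$ and $x(T^{*})\ge x(T)+\frac{A}{2}x(\rho(T))$ with $A=\int_{T}^{\rho(T)}\sum_{i}p_{i}$; multiplying the crude consequences of these yields $x(T^{*})/x(\rho(T^{*}))\ge A^{2}/4$, the classical $\alpha^{2}/4$ estimate. Even if you keep all four ratios $u=x(T)/x(\rho(T))$, $v=x(T^{*})/x(\rho(T))$, $w=x(\rho(T^{*}))/x(\rho(T))$, $z=v/w$ and optimize, the two inequalities $1\ge v+\frac{A}{2}w$ and $v\ge u+\frac{A}{2}$ give at best $z\ge\frac{A(2u+A)}{2(2-2u-A)}$, i.e.\ a quadratic with constant term $\alpha^{2}/4$ rather than $\alpha^{2}/2$, whose smaller root $\frac{1-\alpha-\sqrt{1-2\alpha}}{2}$ is strictly below the claimed bound; and even that estimate is obtained only at the splitting points $T_{n}^{*}$, which need not exhaust all large $t$, so it does not control the $\liminf$. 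The constant $\alpha^{2}/2$ is exactly what makes the Yu--Wang--Zhang--Qian lemma nontrivial, and its proof (and the advanced-type version in \cite[Section 2.6.6]{7}) uses a finer second-order expansion of $x(\sigma_{i}(s))$ inside the integrated equation, not a two-piece splitting. You should either reproduce that argument in dual form or simply cite it, as the paper does.
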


Based on the above inequality, we establish the following theorem.

\begin{theorem}
\label{theorem2.4ab} Assume that $p_{i}(t)\geq 0$, $1\leq i\leq m$, %
\eqref{1.2} is satisfied, $\rho (t)$ is defined by \eqref{2.16}, $b_{r}(t,s)$
by \eqref{2.17} and \eqref{2.18}, and \eqref{2.25} holds. If for some $r\in 
\mathbb{N}$ 
\begin{equation}
\limsup_{t\rightarrow \infty }\int_{t}^{\rho
(t)}\sum_{i=1}^{m}p_{i}(\zeta)b_{r}(\rho (t),\sigma _{i}(\zeta )) ~d\zeta >1-%
\frac{1-\mathfrak{\alpha }-\sqrt{1-2\mathfrak{\alpha }-\mathfrak{\alpha }^{2}%
}}{2},  \label{add_tag_2}
\end{equation}
then all solutions of \eqref{E_A} oscillate.
\end{theorem}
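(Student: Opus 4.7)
The plan is to mirror the proof of Theorem~\ref{theorem2.4_abc}, replacing the delay setup with the advanced one and using Lemma~\ref{lemma_for_max_decr1} in place of Lemma~\ref{lemma_for_max_decrease}. Assuming for contradiction that a nonoscillatory solution $x(t)$ of \eqref{E_A} exists, we may take $x(t)>0$ for $t$ large. Since $x'(t)=\sum_{i=1}^m p_i(t)x(\sigma_i(t))\geq 0$ eventually, $x$ is eventually non-decreasing.

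The first technical step is to record the advanced analog of Lemma~\ref{lemma2.1}: for $0\leq t\leq s$ and each $r\in\mathbb{N}$, one has $x(s)\geq x(t)\,b_r(t,s)$. This is proved by the same Grönwall-induction argument: at the base step $x'(t)\geq x(t)\sum p_i(t)$ (because $x(\sigma_i(t))\geq x(t)$), while at the induction step we substitute $x(\sigma_i(t))\geq x(t)\,b_r(t,\sigma_i(t))$ into \eqref{E_A}. I would quote this as a routine dual of Lemma~\ref{lemma2.1}, exactly as the authors do at the level of Theorem~\ref{theorem2.4a}.

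Next, integrate \eqref{E_A} from $t$ to $\rho(t)$. For $\zeta\in[t,\rho(t)]$ and each $i$, the definition \eqref{2.16} gives $\sigma_i(\zeta)\geq \rho_i(t)\geq \rho(t)$, so by the lemma in the previous paragraph
\begin{equation*}
x(\sigma_i(\zeta))\geq x(\rho(t))\,b_r(\rho(t),\sigma_i(\zeta)).
\end{equation*}
Substituting into $x(\rho(t))-x(t)=\int_t^{\rho(t)}\sum_{i=1}^m p_i(\zeta)x(\sigma_i(\zeta))\,d\zeta$ and dividing by $x(\rho(t))>0$ yields
\begin{equation*}
\int_t^{\rho(t)}\sum_{i=1}^m p_i(\zeta)\,b_r(\rho(t),\sigma_i(\zeta))\,d\zeta\leq 1-\frac{x(t)}{x(\rho(t))}.
\end{equation*}

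Taking $\limsup$ as $t\to\infty$ and applying Lemma~\ref{lemma_for_max_decr1} to bound $\liminf x(t)/x(\rho(t))$ from below by $(1-\alpha-\sqrt{1-2\alpha-\alpha^{2}})/2$ gives
\begin{equation*}
\limsup_{t\to\infty}\int_t^{\rho(t)}\sum_{i=1}^m p_i(\zeta)\,b_r(\rho(t),\sigma_i(\zeta))\,d\zeta\leq 1-\frac{1-\alpha-\sqrt{1-2\alpha-\alpha^{2}}}{2},
\end{equation*}
contradicting \eqref{add_tag_2}. The only non-routine point is the directional bookkeeping when stating the advanced analog of Lemma~\ref{lemma2.1} and verifying $\sigma_i(\zeta)\geq \rho(t)$ on $[t,\rho(t)]$; once these are in place, the rest is a mechanical dualization of the delay proof.
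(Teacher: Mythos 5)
Your proposal is correct and follows exactly the route the paper intends: the authors omit this proof, stating it is the mechanical dualization of the delay case (Theorem~\ref{theorem2.4_abc}), which is precisely what you carry out — the advanced analog of Lemma~\ref{lemma2.1} via Grönwall induction, the integration of \eqref{E_A} over $[t,\rho(t)]$ using $\sigma_i(\zeta)\geq\rho_i(t)\geq\rho(t)$, and Lemma~\ref{lemma_for_max_decr1} to bound $\liminf x(t)/x(\rho(t))$. No gaps.
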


\begin{theorem}
\label{theorem2.5b} Assume that $p_{i}(t)\geq 0$, $1\leq i\leq m$, %
\eqref{1.2} holds, $\rho (t)$ is defined by 
\eqref{2.16}, $b_{r}(t,s)$ are denoted in \eqref{2.17}, \eqref{2.18}. If for
some $\ r\in {\mathbb{N}}$ 
\begin{equation}
\liminf_{t\rightarrow \infty }\sum_{i=1}^{m}\int_{t}^{\rho (t)}p_{i}(\zeta
)b_{r}(\rho (\zeta),\sigma _{i}(\zeta ))\,d\zeta >\frac{1}{e}\,,  \label{2.19abc}
\end{equation}%
then all solutions of \eqref{E_A} oscillate.
\end{theorem}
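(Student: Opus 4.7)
The plan is to dualize the proof of Theorem~\ref{theorem3.3}, exchanging the roles of past and future. First I would establish the advanced counterpart of Lemma~\ref{lemma2.1}: if $x$ is an eventually positive solution of \eqref{E_A}, then $x(s)\geq x(t)\,b_r(t,s)$ for $0\leq t\leq s$. Since $x'(t)=\sum_i p_i(t)x(\sigma_i(t))\geq 0$, $x$ is eventually non-decreasing, so $x(\sigma_i(t))\geq x(t)$ and $x'(t)\geq x(t)\sum p_i(t)$; Gr\"onwall's inequality yields the base case $r=1$. The inductive step substitutes $x(\sigma_i(\zeta))\geq x(\zeta)b_r(\zeta,\sigma_i(\zeta))$ into \eqref{E_A} and reapplies Gr\"onwall, exactly paralleling the delay lemma.

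Assume, for contradiction, that $x$ is an eventually positive solution. Dividing \eqref{E_A} by $x(t)$ and integrating over $[t,\rho(t)]$ gives
\[
\ln\frac{x(\rho(t))}{x(t)}=\int_{t}^{\rho(t)}\sum_{i=1}^m p_i(\zeta)\frac{x(\sigma_i(\zeta))}{x(\zeta)}\,d\zeta.
\]
For $\zeta\in[t,\rho(t)]$ one has $\sigma_i(\zeta)\geq\rho_i(\zeta)\geq\rho_i(t)\geq\rho(t)$ by the definitions and the monotonicity of $\rho_i$, so the advanced lemma gives $x(\sigma_i(\zeta))\geq b_r(\rho(t),\sigma_i(\zeta))\,x(\rho(t))$; combined with $x(\rho(t))\geq x(\zeta)$ one obtains
\[
\ln\frac{x(\rho(t))}{x(t)}\geq\int_{t}^{\rho(t)}\sum_{i=1}^m p_i(\zeta)\,b_r(\rho(t),\sigma_i(\zeta))\,d\zeta.
\]
Because $b_r(\cdot,s)$ is non-increasing in its first argument and $\rho$ is non-decreasing, for $\zeta\geq t$ we have $b_r(\rho(t),\sigma_i(\zeta))\geq b_r(\rho(\zeta),\sigma_i(\zeta))$. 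Hypothesis \eqref{2.19abc} then furnishes a constant $c>1/e$ with $\ln(x(\rho(t))/x(t))\geq c$ for all large $t$, and iterating exactly as in the proof of Theorem~\ref{theorem3.3} yields $x(\rho(t))/x(t)\geq(ec)^k$ for every $k\in\mathbb{N}$ and $t$ large enough.

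Fix $k$ with $(ec)^k>4/c^2$. For large $t$, pick a splitting point $t_m\in(t,\rho(t))$ so that each of the two pieces of $\int_{t}^{\rho(t)}\sum p_i(\zeta)b_r(\rho(\zeta),\sigma_i(\zeta))\,d\zeta$ is at least $c/2$. Integrating \eqref{E_A} from $t$ to $t_m$ and using $x(\sigma_i(\zeta))\geq b_r(\rho(\zeta),\sigma_i(\zeta))x(\rho(\zeta))\geq b_r(\rho(\zeta),\sigma_i(\zeta))x(\rho(t))$ (the second inequality because $\rho(\zeta)\geq\rho(t)$ for $\zeta\geq t$) gives $x(t_m)\geq(c/2)x(\rho(t))$; running the same argument on $[t_m,\rho(t)]$ with $\rho(\zeta)\geq\rho(t_m)$ yields $x(\rho(t))\geq(c/2)x(\rho(t_m))$. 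Chaining the two inequalities produces $x(\rho(t_m))/x(t_m)\leq 4/c^2$, contradicting the iterated lower bound applied at $t_m$.

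The main obstacle is the bookkeeping of monotonicity directions: while $g$ and $\rho$ are both non-decreasing, $a_r(\cdot,s)$ is non-decreasing in its first argument whereas $b_r(\cdot,s)$ is non-increasing, so every sign in the delay proof must be checked. Interestingly, this reversal simplifies the splitting step: using $x(\rho(\zeta))\geq x(\rho(t))$ on $[t,t_m]$ and $x(\rho(\zeta))\geq x(\rho(t_m))$ on $[t_m,\rho(t)]$, one can pull $x(\rho(t))$ respectively $x(\rho(t_m))$ out of each integral directly, without having to relocate the first argument of $b_r$ to the endpoint as was done with $a_r$ in the delay proof.
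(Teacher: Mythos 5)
Your proposal is correct and follows exactly the route the paper intends: the paper omits this proof, stating it is the dual of the delay argument, and you carry out precisely that dualization (advanced Gr\"onwall lemma for $b_r$, the logarithmic estimate over $[t,\rho(t)]$, the iteration to $(ec)^k$, and the interval-splitting contradiction), with the monotonicity directions of $\rho$, $b_r$, and $x$ all handled correctly. No gaps.
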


A slight modification in the proofs of Theorems~\ref{theorem2.4a}, \ref%
{theorem2.4ab} and \ref{theorem2.5b} leads to the following result about
advanced differential inequalities. 

\begin{theorem}
\label{theorem2.5ab} Assume that all the conditions of anyone of Theorems %
\ref{theorem2.4a}, \ref{theorem2.4ab} and \ref{theorem2.5b} hold. Then

(i) the differential inequality 
\begin{equation*}
x^{\prime }(t)-\sum\limits_{i=1}^{m}p_{i}(t)x\left( \sigma _{i}(t)\right)
\geq 0,~~t\geq 0,
\end{equation*}
has no eventually positive solutions;

(ii) the differential inequality 
\begin{equation*}
x^{\prime }(t)-\sum\limits_{i=1}^{m}p_{i}(t)x\left( \sigma _{i}(t)\right)
\leq 0,\text{ }t\geq 0,
\end{equation*}
has no eventually negative solutions.
\end{theorem}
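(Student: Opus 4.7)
The plan is to mirror the three proofs of Theorems~\ref{theorem2.4a}, \ref{theorem2.4ab}, and \ref{theorem2.5b}, observing that each proof uses only inequalities in the ``right direction,'' so that replacing the equality in \eqref{E_A} by ``$\geq$'' (for positive solutions) or ``$\leq$'' (for negative solutions) leaves every step intact. Part~(ii) then follows from part~(i) by the substitution $y(t)=-x(t)$: if $x(t)<0$ eventually satisfies $x'(t)-\sum p_i(t)x(\sigma_i(t))\leq 0$, then $y(t)>0$ eventually satisfies $y'(t)-\sum p_i(t)y(\sigma_i(t))\geq 0$, which contradicts part~(i).

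For part~(i), suppose for contradiction that $x(t)>0$ eventually is a solution of the advanced differential inequality. Since $p_i(t)\geq 0$, one has $x'(t)\geq\sum p_i(t)x(\sigma_i(t))\geq 0$, so $x$ is eventually non-decreasing. Because $\sigma_i(t)\geq t$, it follows that $x(\sigma_i(t))\geq x(t)$, hence $x'(t)\geq x(t)\sum p_i(t)$. The Gr\"onwall-based induction used in Lemma~\ref{lemma2.1} (in its advanced form) goes through verbatim from the inequality $x'\geq\sum p_i\,x\circ\sigma_i$, yielding
\begin{equation*}
x(s)\geq x(t)\,b_r(t,s),\qquad 0\leq t\leq s,\ r\in\mathbb{N}.
\end{equation*}
Moreover, for $\zeta\geq t$ one has $\sigma_i(\zeta)\geq \rho_i(t)\geq \rho(t)$, so $x(\sigma_i(\zeta))\geq x(\rho(t))\,b_r(\rho(t),\sigma_i(\zeta))$.

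Under the hypotheses of Theorem~\ref{theorem2.4a}, I would integrate the inequality on $[t,\rho(t)]$ and substitute the above estimate to obtain
\begin{equation*}
x(\rho(t))\;>\;x(\rho(t))-x(t)\;\geq\;x(\rho(t))\int_{t}^{\rho(t)}\sum_{i=1}^{m}p_i(\zeta)\,b_r(\rho(t),\sigma_i(\zeta))\,d\zeta,
\end{equation*}
after which dividing by $x(\rho(t))>0$ and taking the limsup contradicts \eqref{2.19}. Under the hypotheses of Theorem~\ref{theorem2.4ab}, one first extends Lemma~\ref{lemma_for_max_decr1} to the inequality by the same proof (its derivation only invokes the monotonicity of $x$ and the Gr\"onwall-type estimates, both of which survive here), and then combines that lemma with the integrated inequality $x(\rho(t))-x(t)\geq x(\rho(t))\int_t^{\rho(t)}\sum p_i\,b_r$ to contradict \eqref{add_tag_2}, just as in the original proof.

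The case corresponding to Theorem~\ref{theorem2.5b} is where I expect the most bookkeeping, since its delay analog (Theorem~\ref{theorem3.3}) uses an iterative argument with a splitting point $t_m\in(g(t),t)$. The same scheme has to be reproduced in the advanced setting with $t_m\in(t,\rho(t))$: starting from $x'/x\geq\sum p_i\,x(\sigma_i)/x$, integrating on $[t,\rho(t)]$, applying the chain $x(\sigma_i(\zeta))\geq x(\rho(t))\,b_r(\rho(t),\sigma_i(\zeta))\geq x(\zeta)\,b_r(\rho(\zeta),\sigma_i(\zeta))$, and iterating to obtain $x(\rho(t))/x(t)\geq (ec)^k$, then splitting the integral at $t_m$ to arrive at $x(\rho(t_m))\leq(4/c^2)x(t_m)$, the converse of the geometric-growth bound. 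Each inequality of the original argument is used in a direction preserved by the substitution ``$=$''$\rightsquigarrow$``$\geq$,'' so no new oscillation condition is needed. The only real delicacy, and the step I would double-check carefully, is that the strict-inequality step analogous to the one deriving \eqref{3.3b} continues to produce a strict gap when $x(t)>0$ on the right side, but this is guaranteed by $x(t)>0$ in the same way as in the delay case. This completes the contradiction under each of the three sets of hypotheses, and part~(i) follows; part~(ii) has already been reduced to it.
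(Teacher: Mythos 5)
Your proposal is correct and is essentially the argument the paper intends: the paper offers no written proof of Theorem~\ref{theorem2.5ab} beyond the remark that ``a slight modification in the proofs of Theorems~\ref{theorem2.4a}, \ref{theorem2.4ab} and \ref{theorem2.5b}'' yields it, and your direct rerunning of those proofs with ``$=$'' replaced by ``$\geq$'' (every estimate, including the advanced Gr\"onwall iteration $x(s)\geq x(t)b_r(t,s)$ and the monotonicity $b_r(\rho(t),\sigma_i(\zeta))\geq b_r(\rho(\zeta),\sigma_i(\zeta))$ for $\zeta\geq t$, being used in the preserved direction), plus the reduction of (ii) to (i) via $y=-x$, is exactly that modification carried out in detail.
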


\section{Examples}

In this section we provide two examples illustrating 
Theorems~\ref{theorem2.4} and \ref{theorem2.4a}.  Similarly, examples to
illustrate the other main results of the paper can be constructed.

\begin{example}
\label{ex_theorem3} Consider the delay differential equation 
\begin{equation}
x^{\prime }(t)+\frac{1}{2e}x(\tau _{1}(t))+\frac{1}{2.2e}x(\tau _{2}(t))=0,%
\text{ \ \ \ }t\geq 1\text{,}  \label{ex_3_eq3.1}
\end{equation}
where (see Fig.~\ref{figure1}, a) 
\begin{equation*}
\tau _{1}(t)=\left\{ 
\begin{array}{ll}
-t+4k+1, & \text{if }t\in \left[ 2k+1,2k+2\right], \\ 
3t-4k-7, & \text{if }t\in \left[ 2k+2,2k+3\right],
\end{array}
\right. \text{ \ and \ }\tau _{2}(t)=\tau _{1}(t)-0.1\text{,} 
\ \ k\in \mathbb{N}_{0}= \{ 0,1,2,\dots \} \; .
\end{equation*}
By \eqref{2.3}, we see (Fig.~\ref{figure1}, b) that 
\begin{equation*}
g_{1}(t)=\sup_{s\leq t}\tau _{1}(s)=\left\{ 
\begin{array}{ll}
2k\text{,} & \text{if }t\in \left[ 2k+1,2k+7/3\right], \\ 
3t-4k-7\text{,} & \text{if }t\in \left[ 2k+7/3,2k+3\right],
\end{array}
\right. \text{ \ }k\in \mathbb{N}_{0}
\end{equation*}
and 
\begin{equation*}
g_{2}(t)=\sup_{s\leq t}\tau _{2}(s)=g_{1}(t)-0.1.
\end{equation*}

\begin{figure}[ptb]
\vspace{-20mm} \includegraphics[scale=0.75]{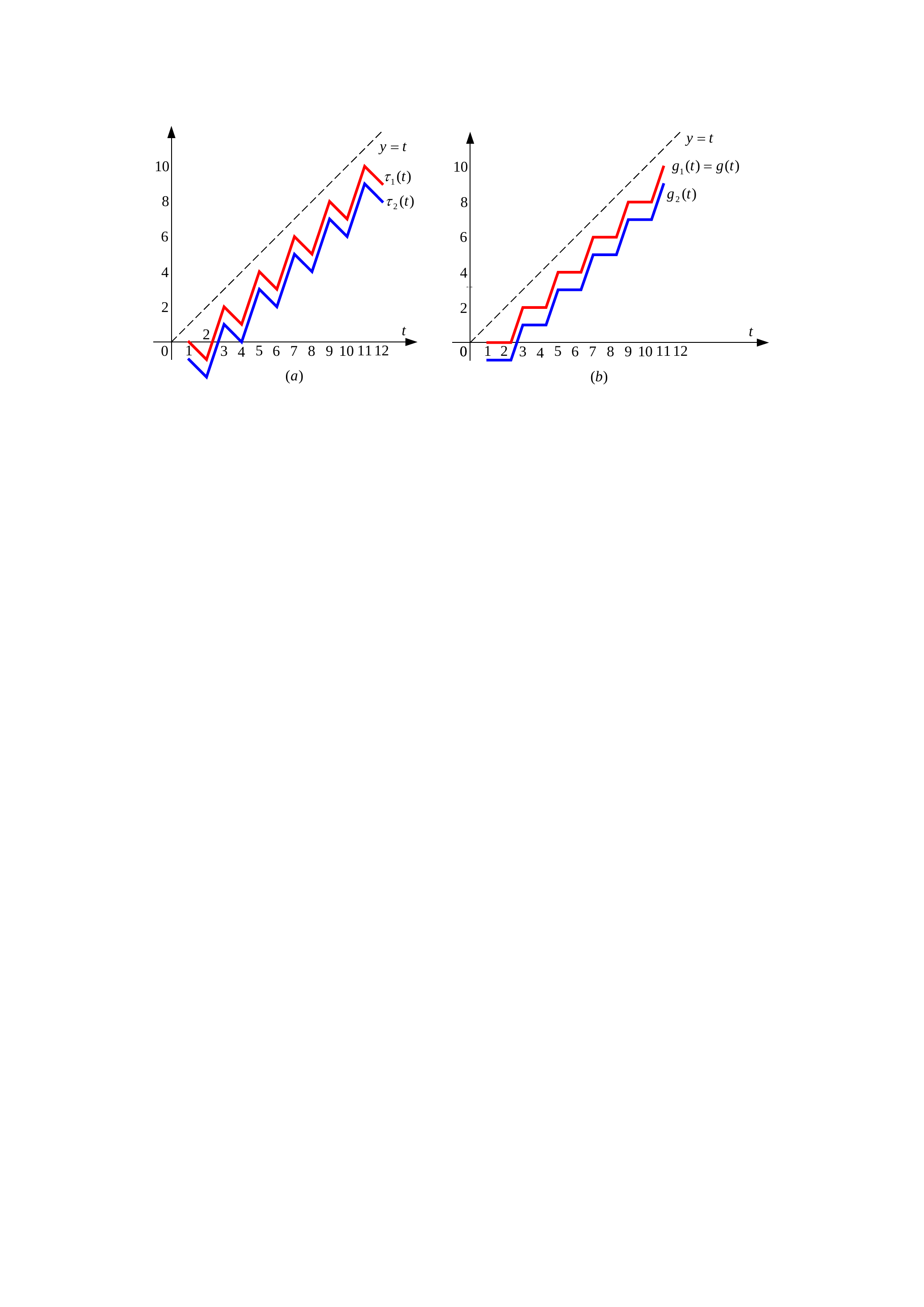} \vspace{-155mm%
}
\caption{The graphs of a) $\protect\tau_i(t)$ and b) $g_{i}(t)$}
\label{figure1}
\end{figure}

Therefore, in view of \eqref{2.4}, we have 
\begin{equation*}
g(t)=\max_{1\leq i\leq 2}\left\{ g_{i}(t)\right\} =g_{1}(t).
\end{equation*}
Define the function $f_{r}:[1,+\infty )\rightarrow (0,+\infty )$ as 
\begin{equation*}
f_{r}(t)=\int_{g(t)}^{t}\sum_{i=1}^{2}p_{i}(\zeta )a_{r}(g(t),\tau
_{i}(\zeta ))\,d\zeta.
\end{equation*}
Now, at $t=2k+3$, $k\in \mathbb{N}_{0}$, we have $g(t=2k+3)=2k+2$. Thus 
\begin{eqnarray*}
&&f_{1}(t=2k+3)=\int_{2k+2}^{2k+3}\sum_{i=1}^{2}p_{i}(\zeta )a_{1}(2k+2,\tau
_{i}(\zeta ))\,d\zeta \\
&=&\int_{2k+2}^{2k+3}\left[ p_{1}(\zeta )a_{1}(2k+2,\tau _{1}(\zeta
))+p_{2}(\zeta )a_{1}(2k+2,\tau _{2}(\zeta ))\right] \,d\zeta \\
&=&\frac{1}{2e}\int_{2k+2}^{2k+3}\exp \left\{ \int_{\tau _{1}(\zeta
)}^{2k+2}\left( p_{1}(\xi )+p_{2}(\xi )\right) d\xi \right\} d\zeta \\
&&+\frac{1}{2.2e}\int_{2k+2}^{2k+3}\exp \left\{ \int_{\tau _{2}(\zeta
)}^{2k+2}\left( p_{1}(\xi )+p_{2}(\xi )\right) d\xi \right\} d\zeta \\
&=&\frac{1}{2e}\int_{2k+2}^{2k+3}\exp \left\{ \frac{2.1}{2.2e}\int_{-\zeta
+4k+1}^{2k+2}d\xi \right\} d\zeta +\frac{1}{2.2e}\int_{2k+2}^{2k+3}\exp
\left\{ \frac{2.1}{2.2e}\int_{-\zeta +4k+0.9}^{2k+2}d\xi \right\} d\zeta \\
&=&\frac{1}{2e}\int_{2k+2}^{2k+3}\exp \left\{ \frac{2.1}{2.2e}\left( \zeta
-2k+1\right) \right\} d\zeta +\frac{1}{2.2e}\int_{2k+2}^{2k+3}\exp \left\{ 
\frac{2.1}{2.2e}\left( \zeta -2k+1.1\right) \right\} d\zeta \\
&=&\frac{11}{21}\left[ \exp \left\{ \frac{2.1}{2.2e}\cdot 4\right\} -\exp
\left\{ \frac{2.1}{2.2e}\cdot 3\right\} \right] +\frac{10}{21}\left[ \exp
\left\{ \frac{2.1}{2.2e}\cdot 4.1\right\} -\exp \left\{ \frac{2.1}{2.2e}%
\cdot 3.1\right\} \right] \\
&\simeq & 1.22696
\end{eqnarray*}
and therefore 
\begin{equation*}
\limsup_{t\rightarrow \infty }f_{1}(t)\gtrapprox 1.22696 >1.
\end{equation*}
That is, condition \eqref{2.10} of Theorem~\ref{theorem2.4} is satisfied for 
$r=1$, and therefore all solutions of \eqref{ex_3_eq3.1} oscillate.

Observe, however, that 
\begin{eqnarray*}
\liminf_{t\rightarrow \infty }\int\limits_{\tau _{\max
}(t)}^{t}\sum\limits_{i=1}^{m}p_{i}(s)ds &=&\liminf_{t\rightarrow \infty
}\int\limits_{\tau _{1}(t)}^{t}\sum\limits_{i=1}^{2}p_{i}(s)ds \\
&=&\left( \frac{1}{2e}+\frac{1}{2.2e}\right) \liminf_{t\rightarrow \infty
}\left( t-\tau _{1}(t)\right) =\frac{2.1}{2.2e}\cdot 1<\frac{1}{e}\text{,}
\end{eqnarray*}%
\begin{eqnarray*}
\liminf_{t\rightarrow \infty }\sum\limits_{i=1}^{m}p_{i}(t)\left( t-\tau
_{i}(t)\right) &=&\liminf_{t\rightarrow \infty }\left[ \frac{1}{2e}\left(
t-\tau _{1}(t)\right) +\frac{1}{2.2e}\left( t-\tau _{2}(t)\right) \right] \\
&=&\frac{1}{2e}\cdot 1+\frac{1}{2.2e}\cdot 1.1=\frac{1}{e},
\end{eqnarray*}%
and therefore none of conditions \eqref{1.8} and \eqref{1.10} is
satisfied.
\end{example}

\begin{example}
Consider the advanced differential equation 
\begin{equation}
x^{\prime }(t)-\frac{7}{40}x(\sigma _{1}(t))-\frac{7}{40}x(\sigma
_{2}(t))=0, \text{ \ \ \ }t\geq 1\text{,}  \label{3.2}
\end{equation}
where (see Fig.~\ref{figure2}, a) 
\begin{equation*}
\sigma _{1}(t)=\left\{ 
\begin{array}{ll}
4t-6k-2, & \text{if }t\in \left[ 2k+1,2k+2\right], \\ 
-2t+6k+10, & \text{if }t\in \left[ 2k+2,2k+3\right],
\end{array}%
\right. \text{ \ and \ }\sigma _{2}(t)=\sigma _{1}(t)+0.1\text{, \ \ \ }k\in 
\mathbb{N}_{0}\text{.}
\end{equation*}
By \eqref{2.15}, we see (Fig.~\ref{figure2}, b) that 
\begin{equation*}
\rho _{1}(t):=\inf_{t\leq s}\sigma _{1}(s)=\left\{ 
\begin{array}{ll}
4t-6k-2, & \text{if }t\in \left[ 2k+1,2k+1.5\right], \\ 
2k+4, & \text{if }t\in \left[ 2k+1.5,2k+3\right],
\end{array}%
\right. \text{ \ \ \ \ }k\in \mathbb{N}_{0}
\end{equation*}
and 
\begin{equation*}
\rho _{2}(t)=\inf_{t\leq s}\sigma _{2}(s)=\rho _{1}(t)+0.1\text{.}
\end{equation*}
Therefore, \eqref{2.16} gives 
\begin{equation*}
\rho (t)=\min_{1\leq i\leq 2}\left\{ \rho _{i}(t)\right\}
=\rho_{1}(t)=\left\{ 
\begin{array}{ll}
4t-6k-2, & \text{if }t\in \left[ 2k+1,2k+1.5\right], \\ 
2k+4, & \text{if }t\in \left[ 2k+1.5,2k+3\right],
\end{array}
\right. \text{ \ \ \ \ }k\in \mathbb{N}_{0}\text{.}
\end{equation*}

\begin{figure}[ptb]
\vspace{-20mm} \includegraphics[scale=0.75]{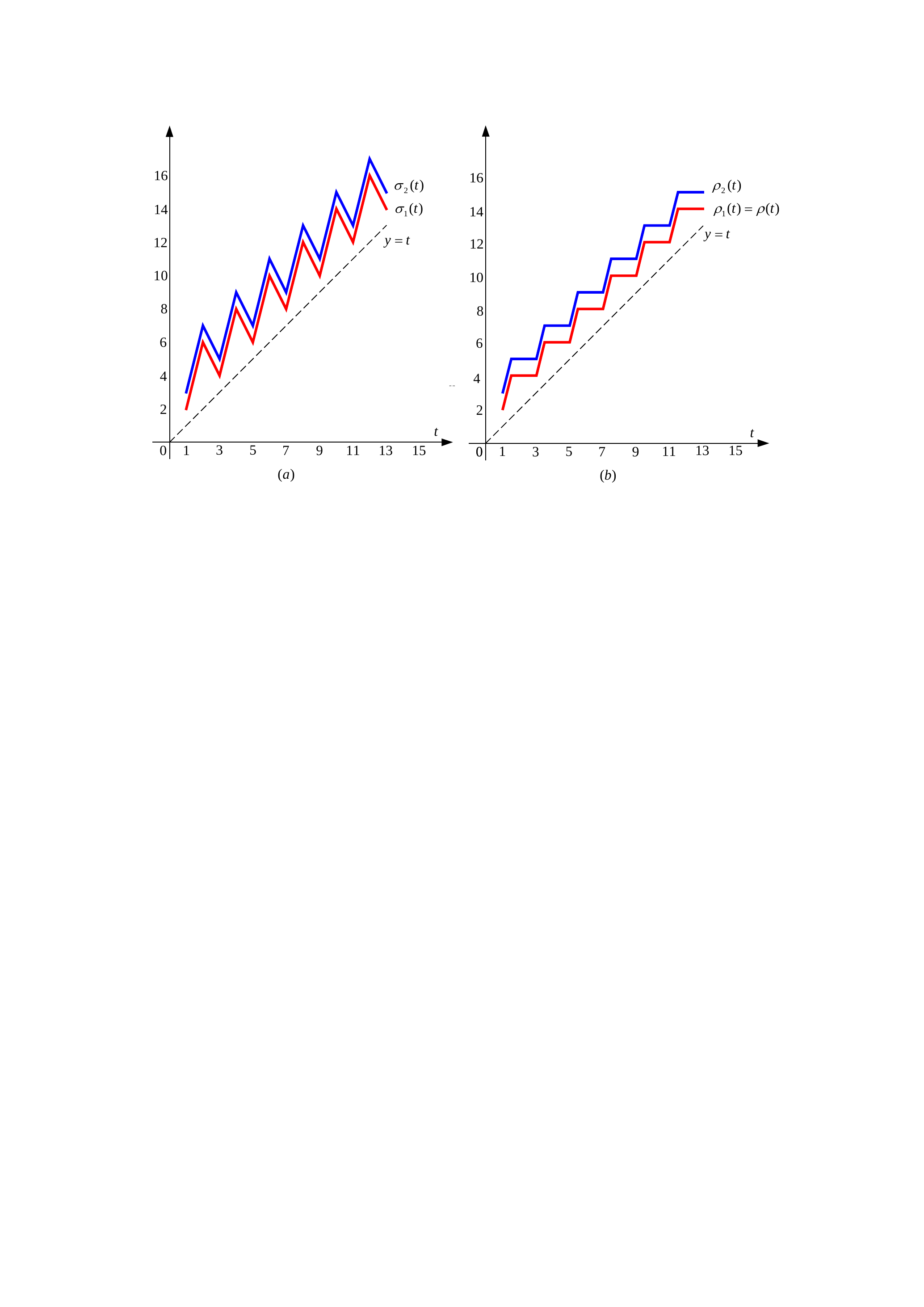} \vspace{-135mm}
\caption{The graphs of a) $\protect\sigma_i(t)$ and b) $\protect\rho_{i}(t)$}
\label{figure2}
\end{figure}


Define the function $f_{r}:[1,+\infty )\rightarrow (0,+\infty )$ as 
\begin{equation*}
f_{r}(t)=\int_{t}^{\rho (t)}\sum_{i=1}^{2}p_{i}(\zeta )b_{r}(\rho (t),\sigma
_{i}(\zeta ))\,d\zeta \text{.}
\end{equation*}
Now, at $t=2k+1$, $k\in \mathbb{N}_{0}$, we have $\rho (t=2k+1)=2k+2$. Thus 
\begin{eqnarray*}
&&f_{1}(t=2k+1)=\int_{2k+1}^{2k+2}\sum_{i=1}^{2}p_{i}(\zeta
)b_{1}(2k+2,\sigma _{i}(\zeta ))\,d\zeta \\
&=&\int_{2k+1}^{2k+2}\left[ p_{1}(\zeta )b_{1}(2k+2,\sigma _{1}(\zeta
))+p_{2}(\zeta )b_{1}(2k+2,\sigma _{2}(\zeta ))\right] d\zeta \\
&=&\frac{7}{40}\int_{2k+1}^{2k+2}\exp \left\{ \frac{7}{20}%
\int_{2k+2}^{4\zeta -6k-2}d\xi \right\} d\zeta +\frac{7}{40}%
\int_{2k+1}^{2k+2}\exp \left\{ \frac{7}{20}\int_{2k+2}^{4\zeta -6k-1.9}d\xi
\right\} d\zeta \\
&=&\frac{7}{40}\int_{2k+1}^{2k+2}\exp \left\{ \frac{7}{20}\left( 4\zeta
-8k-4\right) \right\} d\zeta +\frac{7}{40}\int_{2k+1}^{2k+2}\exp \left\{ 
\frac{7}{20}\left( 4\zeta -8k-3.9\right) \right\} d\zeta \\
&=&\frac{1}{8}\left[ \exp \left( 1.4\right) -1\right] +\frac{1}{8}\left[
\exp \left( 1.435\right) -\exp \left( 0.035\right) \right] \simeq 0.777403
\text{.}
\end{eqnarray*}
\begin{eqnarray*}
&&f_{2}(t=2k+1)=\int_{2k+1}^{2k+2}\sum_{i=1}^{2}p_{i}(\zeta
)b_{2}(2k+2,\sigma _{i}(\zeta ))\,d\zeta \\
&=&\int_{2k+1}^{2k+2}\left[ p_{1}(\zeta )b_{2}(2k+2,\sigma _{1}(\zeta
))+p_{2}(\zeta )b_{2}(2k+2,\sigma _{2}(\zeta ))\right] d\zeta \\
&=&\frac{7}{40}\int_{2k+1}^{2k+2}b_{2}(2k+2,\sigma _{1}(\zeta ))d\zeta +%
\frac{7}{40}\int_{2k+1}^{2k+2}b_{2}(2k+2,\sigma _{2}(\zeta ))d\zeta \\
&=&\frac{7}{40}\int_{2k+1}^{2k+2}\exp \left\{ \int_{2k+2}^{\sigma _{1}(\zeta
)}\left[ \frac{7}{40}b_{1}(2k+2,\sigma _{1}(\xi ))+\frac{7}{40}%
b_{1}(2k+2,\sigma _{2}(\xi ))\right] d\xi \right\} d\zeta \\
&&+\frac{7}{40}\int_{2k+1}^{2k+2}\exp \left\{ \int_{2k+2}^{\sigma _{2}(\zeta
)}\left[ \frac{7}{40}b_{1}(2k+2,\sigma _{1}(\xi ))+\frac{7}{40}%
b_{1}(2k+2,\sigma _{2}(\xi ))\right] d\xi \right\} d\zeta \\
&=&\frac{7}{40}\int_{2k+1}^{2k+2}\exp \left\{ \int_{2k+2}^{\sigma _{1}(\zeta
)}\left[ \frac{7}{40}\exp \left( \frac{7}{20}\int_{2k+2}^{\sigma _{1}(\xi
)}du\right) +\frac{7}{40}\exp \left( \frac{7}{20}\int_{2k+2}^{\sigma
_{2}(\xi )}du\right) \right] d\xi \right\} d\zeta \\
&&+\frac{7}{40}\int_{2k+1}^{2k+2}\exp \left\{ \int_{2k+2}^{\sigma _{2}(\zeta
)}\left[ \frac{7}{40}\exp \left( \frac{7}{20}\int_{2k+2}^{\sigma _{1}(\xi
)}du\right) +\frac{7}{40}\exp \left( \frac{7}{20}\int_{2k+2}^{\sigma
_{2}(\xi )}du\right) \right] d\xi \right\} d\zeta \\
&=&\frac{7}{40}\int_{2k+1}^{2k+2}\exp \left\{ \int_{2k+2}^{\sigma _{1}(\zeta
)}\left[ \frac{7}{40}\exp \left( \frac{7}{20}\left( \sigma _{1}(\xi
)-2k-2\right) \right) +\frac{7}{40}\exp \left( \frac{7}{20}\left( \sigma
_{2}(\xi )-2k-2\right) \right) \right] d\xi \right\} d\zeta \\
&&+\frac{7}{40}\int_{2k+1}^{2k+2}\exp \left\{ \int_{2k+2}^{\sigma _{2}(\zeta
)}\left[ \frac{7}{40}\exp \left( \frac{7}{20}\left( \sigma _{1}(\xi
)-2k-2\right) \right) +\frac{7}{40}\exp \left( \frac{7}{20}\left( \sigma
_{2}(\xi )-2k-2\right) \right) \right] d\xi \right\} d\zeta
\\
&=&\frac{7}{40}\int_{2k+1}^{2k+2}\exp \left\{ \int_{2k+2}^{\sigma _{1}(\zeta
)}\left[ \frac{7}{40}\exp \left( \frac{7}{20}\left( -2\xi +4k+8\right)
\right) +\frac{7}{40}\exp \left( \frac{7}{20}\left( -2\xi +4k+8.1\right)
\right) \right] d\xi \right\} d\zeta \\
&&+\frac{7}{40}\int_{2k+1}^{2k+2}\exp \left\{ \int_{2k+2}^{\sigma _{2}(\zeta
)}\left[ \frac{7}{40}\exp \left( \frac{7}{20}\left( -2\xi +4k+8\right)
\right) +\frac{7}{40}\exp \left( \frac{7}{20}\left( -2\xi +4k+8.1\right)
\right) \right] d\xi \right\} d\zeta \\
&=&\frac{7}{40}\int_{2k+1}^{2k+2}\exp \left\{ 
\begin{array}{c}
-\frac{1}{4}\left[ \exp \left( \frac{7}{20}\left( -8\zeta +16k+12\right)
\right) -\exp \left( 1.4\right) \right] \\ 
-\frac{1}{4}\left[ \exp \left( \frac{7}{20}\left( -8\zeta +16k+12.1\right)
\right) -\exp \left( 1.435\right) \right]%
\end{array}%
\right\} d\zeta \\
&&+\frac{7}{40}\int_{2k+1}^{2k+2}\exp \left\{ 
\begin{array}{c}
-\frac{1}{4}\left[ \exp \left( \frac{7}{20}\left( -8\zeta +16k+11.8\right)
\right) -\exp \left( 1.4\right) \right] \\ 
-\frac{1}{4}\left[ \exp \left( \frac{7}{20}\left( -8\zeta +16k+11.9\right)
\right) -\exp \left( 1.435\right) \right]%
\end{array}%
\right\} d\zeta \simeq 1.558893 > 1.
\end{eqnarray*}
Thus 
condition \eqref{2.19} of Theorem~\ref{theorem2.4a} is satisfied for $r=2$,
and therefore all solutions of \eqref{3.2} oscillate.

Observe, however, that 
\begin{eqnarray*}
\liminf_{t\rightarrow \infty }\sum\limits_{i=1}^{m}\int\limits_{t}^{\sigma
_{\min }(t)}p_{i}(s)ds &=&\liminf_{t\rightarrow \infty
}\sum\limits_{i=1}^{2}\int\limits_{t}^{\sigma _{1}(t)}p_{i}(s)ds \\
&=&\left( \frac{7}{40}+\frac{7}{40}\right) \liminf_{t\rightarrow \infty
}\left( \sigma _{1}(t)-t\right) =\frac{7}{20}<\frac{1}{e}\text{,}
\end{eqnarray*}
\begin{eqnarray*}
\liminf_{t\rightarrow \infty }\sum\limits_{i=1}^{m}p_{i}(t)\left( \sigma
_{i}(t)-t\right) &=&\liminf_{t\rightarrow \infty }\left[ \frac{7}{40}\left(
\sigma _{1}(t)-t\right) +\frac{7}{40}\left( \sigma _{2}(t)-t\right) \right]
\\
&=&\frac{7}{40}\cdot 1+\frac{7}{40}\cdot 1.1=0.3675<\frac{1}{e},
\end{eqnarray*}%
and therefore none of conditions \eqref{1.9} and \eqref{1.11} is
satisfied.
\end{example}


\section{Acknowledgments}

E. Braverman was partially supported by the NSERC research grant RGPIN-2015-05976.

\section{Competing interests}

The authors declare that they have no competing interests.

\section{Author's contributions}

The authors declare that they have made equal contributions to the paper.

\end{document}